\titleformat{\section}[block]
 {\bfseries}
 {\thesection.}
 {\fontdimen2\font}
 {}
\setlist{noitemsep}
\newtheorem{theorem}{Theorem}[section]
\newtheorem{corollary}[theorem]{Corollary}
\newtheorem{proposition}[theorem]{Proposition}
\theoremstyle{definition}
\newtheorem{remark}[theorem]{Remark}
\newtheorem{example}[theorem]{Example}
\DeclareMathOperator{\N}{\mathbb{N}}
\DeclareMathOperator{\R}{\mathbb{R}}
\DeclareMathOperator{\Q}{\mathbb{Q}}
\DeclareMathOperator{\uhr}{\upharpoonright}
\DeclareMathOperator\sto{\leadsto}
\DeclareMathOperator\coz{coz}
\DeclareMathOperator\supp{supp}
\DeclareMathOperator{\lip}{Lip}
\renewcommand{\emptyset}{\varnothing}
\numberwithin{equation}{section}
\begin{document}

\author{Valentin Gutev}

\address{Institute of Mathematics and Informatics, Bulgarian Academy
  of Sciences, Acad. G. Bonchev Street, Block 8, 1113 Sofia, Bulgaria}

\email{\href{mailto:gutev@math.bas.bg}{gutev@math.bas.bg}}

\subjclass[2010]{26A16, 54C20, 54C60, 54C65, 54E35, 54E40}

\keywords{Lipschitz function, locally Lipschitz function, locally
  pointwise Lipschitz function, partition of unity, extension,
  selection}

\title{On real-valued functions of Lipschitz type}

\begin{abstract}
  The classical McShane-Whitney extension theorem for Lipschitz
  functions is refined by showing that for a closed subset of the
  domain, it remains valid for any interval of the real line. This
  result is also extended to the setting of locally (pointwise)
  Lipschitz functions. In contrast to Lipschitz and pointwise
  Lipschitz extensions, the construction of locally Lipschitz
  extensions is based on Lipschitz partitions of unity of countable
  open covers of the domain. Such partitions of unity are a special
  case of a more general result obtained by Zden\v{e}k Frol\'{\i}k. To
  avoid the use of Stone's theorem (paracompactness of metrizable
  spaces), it is given a simple direct proof of this special case of
  Frol\'{\i}k's result. As an application, it is shown that the
  locally Lipschitz functions are precisely the locally finite sums of
  sequences of Lipschitz functions. Also, it is obtained a natural
  locally Lipschitz version of one of Michael's selection theorems.
\end{abstract}

\date{\today}
\maketitle

\section{Introduction}

The Lipschitz conditions measure how fast a function can change, hence
they are naturally related to differentiability. In 1919, Hans
Rademacher \cite{MR1511935} showed that each \emph{locally Lipschitz}
function ${f:U\to \R}$ defined on an open set $U\subset \R^n$ is
differentiable almost everywhere in the sense of the Lebesgue
measure. In 1923, Vyacheslav Stepanov (spelled also Wiatscheslaw
Stepanoff) \cite{MR1512177,zbMATH02591494} extended Rademacher's
result to all \emph{locally pointwise Lipschitz} functions
$f:U\to \R$. In the literature, Stepanov's theorem is traditionally
proved by applying Rademacher's theorem to Lipschitz extensions of
Lipschitz functions; a straightforward simple proof based on Lipschitz
approximations was given by Jan Mal\'y \cite{MR1687460}. These
Lipschitz conditions are discussed in detail in the next section.
Here, we will briefly discuss some aspects of the Lipschitz extension
problem.\medskip

For $K\geq 0$ and metric spaces $(X,d)$ and $(Y,\rho)$, a map
$f : X\to Y$ is \emph{$K$-Lipschitz} if $\rho(f(p),f(q))\leq K d(p,q)$
for every $p,q\in X$.  The Lipschitz extension problem deals with
conditions on the metric spaces $(X,d)$ and $(Y,\rho)$ such that every
$K$-Lipschitz map $\varphi:A\to Y$ on a subset $A\subset X$ has a
Lipschitz extension to the whole of $X$ with a minimal loss in the
Lipschitz constant $K$. It is evident that this extension problem
makes sense when the subset $A$ is nonempty, so this will be assumed
in all our considerations without being explicitly stated.\medskip

Some of the first solutions of the Lipschitz extension problem were
complementary to the Tietze extension theorem and dated back to the
works of Hahn and Banach in the 1920s, and McShane, Whitney and
Kirszbraun in the 1930s. The classical Hahn-Banach theorem is based on
a successive point-by-point procedure of extending bounded linear
functionals. In the setting of a general metric domain, the conditions
are less restrictive and the extension is only required to be
Lipschitz with the same Lipschitz constant. In this case, the
successive procedure can be replaced by a much simpler one. The
following theorem was obtained by McShane \cite[Theorem 1]{MR1562984},
see also Whitney \cite[the footnote on p.\ 63]{MR1501735}, and is
commonly called the \emph{McShane-Whitney extension theorem}.

\begin{theorem}
  \label{theorem-Lipschitz-ext:12}
Let $(X,d)$ be a metric space and $A\subset X$. Then each
  $K$-Lipschitz function $\varphi:A\to \R$ can be extended to a
  $K$-Lipschitz function $f:X\to \R$.
\end{theorem}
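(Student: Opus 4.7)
The plan is to write down a single explicit formula for the extension and then verify the two required properties directly. The standard candidate is the McShane envelope
\[
f(x) \;:=\; \inf_{a \in A}\bigl[\varphi(a) + K\, d(x,a)\bigr], \qquad x \in X,
\]
although the symmetric Whitney supremum $\sup_{a\in A}[\varphi(a) - K\, d(x,a)]$ works equally well and in general gives a different extension.

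First I would check that $f(x)\in\R$ for each $x\in X$. Choosing some fixed $a_{0}\in A$, the $K$-Lipschitz property of $\varphi$ combined with the triangle inequality yields
\[
\varphi(a) + K\,d(x,a) \;\geq\; \varphi(a_{0}) - K\,d(a_{0},a) + K\,d(x,a) \;\geq\; \varphi(a_{0}) - K\,d(x,a_{0})
\]
for every $a\in A$, so the infimum is bounded below, and the value $a=a_{0}$ delivers the upper bound $\varphi(a_{0})+K\,d(x,a_{0})$. Next, for $a\in A$ the choice $a=a$ in the infimum gives $f(a)\leq\varphi(a)$, while the reverse inequality is exactly the $K$-Lipschitz estimate applied to $\varphi$, namely $\varphi(b)+K\,d(a,b)\geq\varphi(a)$ for each $b\in A$; hence $f|_{A}=\varphi$.

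For the Lipschitz estimate, fix $x,y\in X$ and any $a\in A$. A single triangle inequality gives
\[
f(x) \;\leq\; \varphi(a) + K\,d(x,a) \;\leq\; \varphi(a) + K\,d(y,a) + K\,d(x,y),
\]
and passing to the infimum over $a$ yields $f(x)-f(y)\leq K\,d(x,y)$; swapping the roles of $x$ and $y$ completes the proof. The whole argument is essentially the triangle inequality applied in three different ways, and I foresee no serious obstacle; the only point requiring a moment's thought is the finiteness of $f$, which would fail without the Lipschitz hypothesis on $\varphi$.
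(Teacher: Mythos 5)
Your proposal is correct and follows essentially the same route as the paper, which establishes Theorem \ref{theorem-Lipschitz-ext:12} via the explicit envelopes $\Phi_{\pm}$ of Theorem \ref{theorem-Lipschitz-ext:13}; your $f$ is exactly the function $\Phi_{+}$ there, and your Lipschitz estimate is the same one-line triangle-inequality argument. The only cosmetic difference is that the paper verifies finiteness and the agreement with $\varphi$ on $A$ by playing the two envelopes $\Phi_{-}\leq\Phi_{+}$ against each other, whereas you check both properties directly for the single envelope, which is equally valid.
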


The extension in Theorem \ref{theorem-Lipschitz-ext:12} was defined by
an explicit formula involving the function $\varphi$, the Lipschitz
constant $K$ and the metric $d$. It is a special case of the following
more general construction summarised in \cite[Theorem
4.1.1]{zbMATH07045619} and \cite[Theorem 2.3]{Gutev2020}.

\begin{theorem}
  \label{theorem-Lipschitz-ext:13}
  Let $(X,d)$ be a metric space, $A\subset X$ and $\varphi:A\to \R$ be
  a $K$-Lip\-schitz function. Define functions
  $\Phi_-:X\to (-\infty,+\infty]$ and
  ${\Phi_+:X\to [-\infty,+\infty)}$ by
  \begin{equation}
    \label{eq:Lipschitz-ext:35}
    \begin{cases}
      \Phi_-(p)=\sup_{x\in A}\left[\varphi(x)-K d(x,p)\right] &\text{and}\\
      \Phi_+(p)=\inf_{x\in A}\left[\varphi(x)+K d(x,p)\right], & p\in X.
    \end{cases}
  \end{equation}
  Then $\Phi_-\leq \Phi_+$ and\/ $\Phi_-,\Phi_+:X\to \R$ are
  $K$-Lipschitz extensions of $\varphi$.
\end{theorem}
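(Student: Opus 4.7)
The plan is to verify the four assertions in order: (i) $\Phi_-$ and $\Phi_+$ take values in $\R$, (ii) $\Phi_-\leq \Phi_+$, (iii) both agree with $\varphi$ on $A$, and (iv) both are $K$-Lipschitz on $X$. The single recurring tool is the triangle inequality combined with the $K$-Lipschitz condition on $\varphi$, namely $\varphi(x)-\varphi(y)\leq Kd(x,y)$ for all $x,y\in A$.

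For (i) and (ii) the key observation I would isolate at the start is the inequality
\[
\varphi(x)-Kd(x,p)\;\leq\;\varphi(y)+Kd(y,p)\qquad (x,y\in A,\ p\in X),
\]
which follows from $\varphi(x)-\varphi(y)\leq Kd(x,y)\leq Kd(x,p)+Kd(y,p)$. Fixing any $y\in A$ (recall $A\neq\emptyset$) and taking the supremum over $x$ gives $\Phi_-(p)\leq \varphi(y)+Kd(y,p)<+\infty$; fixing any $x\in A$ gives $\Phi_-(p)\geq \varphi(x)-Kd(x,p)>-\infty$. The same line bounds $\Phi_+$, so both functions are real-valued. Then taking first the supremum over $x$ and then the infimum over $y$ in the displayed inequality yields $\Phi_-(p)\leq \Phi_+(p)$.

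For (iii), if $p\in A$ then choosing $x=p$ in the definitions gives $\Phi_-(p)\geq \varphi(p)$ and $\Phi_+(p)\leq \varphi(p)$; combined with $\Phi_-\leq \Phi_+$ and $\varphi(p)=\varphi(p)$ on both sides this forces $\Phi_-(p)=\Phi_+(p)=\varphi(p)$, so both functions extend $\varphi$.

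For (iv), for any $p,q\in X$ and $x\in A$, the triangle inequality $d(x,p)\leq d(x,q)+d(p,q)$ gives
\[
\varphi(x)-Kd(x,p)\;\geq\;\bigl[\varphi(x)-Kd(x,q)\bigr]-Kd(p,q).
\]
Taking the supremum over $x\in A$ yields $\Phi_-(p)\geq \Phi_-(q)-Kd(p,q)$, and exchanging the roles of $p$ and $q$ gives $|\Phi_-(p)-\Phi_-(q)|\leq Kd(p,q)$; the argument for $\Phi_+$ is analogous (using $d(x,q)\leq d(x,p)+d(p,q)$ inside the infimum). There is no real obstacle here: the whole proof is a careful bookkeeping of the triangle inequality, and the only point where one must be slightly attentive is the nonemptiness of $A$, which is needed to guarantee finiteness in step (i) and is already assumed globally in the paper.
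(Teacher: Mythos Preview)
Your proof is correct and follows essentially the same route as the paper: the same key inequality $\varphi(x)-\varphi(y)\leq Kd(x,y)\leq Kd(x,p)+Kd(y,p)$ yields $\Phi_-\leq\Phi_+$ and the extension property, and the triangle inequality gives the $K$-Lipschitz bound. The only cosmetic difference is that the paper verifies the Lipschitz property for $\Phi_+$ alone and then invokes the symmetry $\Phi_-=-\Lambda_+[-\varphi]$ to handle $\Phi_-$, whereas you check both directly.
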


The Lipschitz extension $\Phi_-$ in Theorem
\ref{theorem-Lipschitz-ext:13} represents McShane and Whitney's
approach to show Theorem \ref{theorem-Lipschitz-ext:12}. The other
extension $\Phi_+$ was used by Czipszer and Geh\'er \cite{MR71493} for
an alternative proof of the same theorem. Furthermore, the
construction in \eqref{eq:Lipschitz-ext:35} is nearly the same as the
one used in the proof of the classical Hahn-Banach theorem. This
similarity has been noted by some authors, for instance in Czipszer
and Geh\'er \cite{MR71493}. In contrast, as pointed out in
\cite{Gutev2020}, the relationship with Hausdorff's work
\cite{hausdorff:19} remained somehow unnoticed.  Namely, the Lipschitz
extensions in \eqref{eq:Lipschitz-ext:35} are also virtually the same
as the Pasch-Hausdorff construction of Lipschitz maps in
\cite{hausdorff:19}.\medskip

The functions $\Phi_-$ and $\Phi_+$ are also naturally related to each
other. This was implicitly used by Czipszer and Geh\'er in the proof
of their extension result \cite[Theorem II]{MR71493} for pointwise
Lipschitz functions (see Theorem
\ref{theorem-Loc-Lipschitz-v17:1}). Namely, we can consider these
functions as a pair of $K$-Lipschitz extension operators ``$\Lambda$''
for the $K$-Lipschitz functions on the subset $A\subset X$. According
to this interpretation, it is more natural to write that
$\Phi_-=\Lambda_-[\varphi]$ and $\Phi_+=\Lambda_+[\varphi]$ for the
given $K$-Lipschitz function $\varphi:A\to \R$. In these more general
terms, \eqref{eq:Lipschitz-ext:35} obviously implies that
\begin{equation}
  \label{eq:Lipschitz-ext:12}
  \Lambda_{-}[\varphi]=-\Lambda_+[-\varphi]\quad \text{for each
    $K$-Lipschitz function $\varphi:A\to \R$.} 
\end{equation}     

The advantage of using both functions $\Phi_-$ and $\Phi_+$, as done
in Theorem \ref{theorem-Lipschitz-ext:13}, lies in the simplification
of the proof. Briefly, for $x,y\in A$ and $p\in X$, it follows from
the inequality $\varphi(x)-\varphi(y)\leq Kd(x,y)\leq Kd(x,p)+Kd(y,p)$
that $\Phi_-(p)\leq \Phi_+(p)$. In particular,
$\varphi(p)\leq \Phi_-(p)\leq \Phi_+(p)\leq \varphi(p)$ whenever
$p\in A$. The verification that $\Phi_-$ and $\Phi_+$ are
$K$-Lipschitz is also simple. Indeed, for ${p,q\in X}$, adding
$\varphi(x)$ to both sides of $K d(x,p)\leq K d(x,q)+ K d(q,p)$,
$x\in A$, it follows at once that $\Phi_+(p)\leq \Phi_+(q)+Kd(p,q)$.
Therefore, $\Phi_+$ is $K$-Lipschitz and by
\eqref{eq:Lipschitz-ext:12}, $\Phi_-$ is $K$-Lipschitz as
well.\medskip

It was remarked in \cite{MR71493} and shown in \cite[Theorem
4.1.1]{zbMATH07045619} (see also \cite[Theorem 1]{Aronsson1967} and
\cite[Remark 2.4]{Gutev2020}) that any $K$-Lipschitz extension
$f:X\to \R$ of a $K$-Lipschitz function $\varphi:A\to \R$ should
satisfy the inequalities $\Phi_-\leq f\leq \Phi_+$. In fact, this
property is also implicit in the proof of the classical Hahn-Banach
theorem. Moreover, it is in good accord with the fact that each convex
combination of the functions $\Phi_-$ and $\Phi_+$ gives a
$K$-Lipschitz extension of $\varphi:A\to \R$. Based on this, we will
show that McShane-Whitney's extension theorem remains valid if the
real line $\R$ is replaced by any interval $\Delta\subset \R$ and
$A\subset X$ is assumed to be closed. The following theorem will be
proved in Section \ref{sec:extens-lipsch-funct}.

\begin{theorem}
  \label{theorem-Lipschitz-ext:20}
  Let $(X,d)$ be a metric space, $A\subset X$ be a closed set and
  $\Delta\subset \R$ be an interval. Then each $K$-Lipschitz function
  $\varphi:A\to \Delta$ can be extended to a $K$-Lipschitz function
  $f:X\to \Delta$.
\end{theorem}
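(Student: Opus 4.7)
The plan is to apply Theorem \ref{theorem-Lipschitz-ext:13} to obtain the two $K$-Lipschitz extensions $\Phi_-, \Phi_+ : X \to \R$ of $\varphi$, and then to clip them into $\Delta$ via pointwise maxima or minima with suitable constants. All such clippings preserve the $K$-Lipschitz property, and since both $\Phi_-$ and $\Phi_+$ already coincide with $\varphi$ on $A$, agreement on $A$ will come for free provided the clipping constants are chosen consistently with the range of $\varphi$.

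Writing $a = \inf \Delta$ and $b = \sup \Delta$, the key preliminary observation exploits closedness of $A$: for $p \notin A$ one has $d(x,p) \geq d(p,A) > 0$ for every $x \in A$, so that
\[
  \Phi_-(p) \;=\; \sup_{x \in A}\bigl[\varphi(x) - K d(x,p)\bigr] \;\leq\; b - K d(p,A),
\]
and symmetrically $\Phi_+(p) \geq a + K d(p,A)$. Together with $\Phi_\pm = \varphi$ on $A$, this yields $\Phi_- < b$ everywhere on $X$ whenever $b \notin \Delta$, and $\Phi_+ > a$ everywhere on $X$ whenever $a \notin \Delta$.

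With this in hand, the proof proceeds by case analysis on which endpoints belong to $\Delta$. If $a \in \Delta$, I would set $f := \max(a, \Phi_-)$; this is $K$-Lipschitz, agrees with $\varphi$ on $A$ (since $\varphi \geq a$), satisfies $f \geq a$ trivially, and satisfies $f \leq b$ by the trivial bound $\Phi_- \leq b$ combined with the preliminary observation (with strict inequality when $b \notin \Delta$). The case $b \in \Delta$ is dual, handled by $f := \min(b, \Phi_+)$. When both endpoints are excluded and $\Delta$ is bounded, i.e., $\Delta = (a,b)$ with $a,b \in \R$, I would fix any $c \in \Delta$ and define
\[
  f(p) \;:=\; \max\bigl(\Phi_-(p), \min(c, \Phi_+(p))\bigr).
\]
A short check separating $\varphi(p) \leq c$ from $\varphi(p) > c$ yields $f = \varphi$ on $A$; the bounds $\Phi_- < b$ and $\min(c,\Phi_+) \leq c < b$ give $f < b$; and the bounds $\Phi_+ > a$ and $c > a$ give $\min(c,\Phi_+) > a$, hence $f > a$. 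The remaining unbounded-open shapes $(a,\infty)$, $(-\infty,b)$, and $\R$ are treated directly by $\Phi_+$, $\Phi_-$, and $\Phi_-$ respectively.

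The main obstacle will be the bounded open case $\Delta = (a,b)$: neither $\Phi_-$ nor $\Phi_+$ alone need take values in $\Delta$, and no globally constant convex combination $t\Phi_- + (1-t)\Phi_+$ suffices either, since $\Phi_-$ may drop below $a$ at the very points where $\Phi_+$ rises above $b$. The max--min composition displayed above is essentially the minimal device that combines the two extensions so as to repair both endpoints simultaneously while remaining $K$-Lipschitz, and this is precisely where the closedness of $A$ enters in an essential way, through the strict inequalities $\Phi_- < b$ and $\Phi_+ > a$.
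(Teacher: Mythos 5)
Your proposal is correct and rests on essentially the same mechanism as the paper: the inequalities $\Phi_-(p)\leq b-Kd(p,A)$ and $\Phi_+(p)\geq a+Kd(p,A)$ (the paper's \eqref{eq:Lipschitz-ext:18}), which is exactly where closedness of $A$ enters, followed by a $K$-Lipschitz-preserving combination of $\Phi_-$ and $\Phi_+$; the only difference is that in the bounded case the paper averages the two truncations $\max\{\Phi_-,a\}$ and $\min\{\Phi_+,b\}$ (Proposition \ref{proposition-Lipschitz-ext:11}), whereas you use the lattice composite $\max(\Phi_-,\min(c,\Phi_+))$, and both work for the same reason. (Your strict inequalities off $A$ require $K>0$, but the constant case $K=0$ is trivial and the paper likewise sets it aside at the start of Section \ref{sec:extens-lipsch-funct}, so this is not a gap.)
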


A special case of Theorem \ref{theorem-Lipschitz-ext:20} was pointed
out in \cite{Beer2023}. Namely, as remarked after the proof of
\cite[Theorem 3.5]{Beer2023}, the $K$-Lipschitz extension $\Phi_+$ is
strictly positive provided so is the $K$-Lipschitz function
$\varphi:A\to \R$ and the subset $A\subset X$ is closed. Finally, let
us also remark that Theorem \ref{theorem-Lipschitz-ext:20} is
complementary to the Dugundji extension theorem \cite[Theorem
4.1]{dugundji:51}, and its special case for the real line in
\cite[Theorem 3.1$'''$]{michael:56a}. \medskip

Based on the same approach, we will show in Section
\ref{sec:extens-pointw-lipsch} that Theorem
\ref{theorem-Lipschitz-ext:20} is also valid in the setting of
(locally) pointwise Lipschitz functions which improves a result
obtained previously by Czipszer and Geh\'er \cite[Theorem
II]{MR71493}, see also \cite[Theorem 3.2]{Gutev2020}. The rest of the
paper deals with locally Lipschitz functions. Unlike (pointwise)
Lipschitz functions, the extension of locally Lipschitz functions is
based on Theorem \ref{theorem-Lipschitz-ext:20} and Lipschitz
partitions of unity. It was shown by Zden\v{e}k Frol\'{\i}k in
\cite[Theorem 1]{MR814046} that each open cover of a metric space
admits a locally finite partition of unity consisting of Lipschitz
functions. However, this result is essentially equivalent to Stone's
theorem \cite[Corollary 1]{stone:48} that each metrizable space is
paracompact. In Section~\ref{sec:lipsch-locally-lipsc}, we will show
that in \cite[Lemma]{MR814046}\,---\,the special case of countable
covers in the aforementioned Frol\'{\i}k's result, the use of Stone's
theorem can be avoided (see Theorem
\ref{theorem-Loc-Lipschitz-Ext:2}). Next, in Section
\ref{sec:lipsch-locally-lipsc-1}, we will show that each locally
Lipschitz function is a locally finite countable sum of Lipschitz
functions (Theorem \ref{theorem-Loc-Lipschitz-Ext:3}). In the same
section, we will obtain another interesting result that a map between
metric spaces is locally Lipschitz precisely when its ``Lipschitz
constant'' varies continuously over the square of the domain
(Theorem~\ref{theorem-LE-Revisited-v17:1}). Finally, in
Section~\ref{sec:extens-locally-lipsc}, we will show that Theorem
\ref{theorem-Lipschitz-ext:20} is also valid for locally Lipschitz
functions (Theorem \ref{theorem-Loc-Lipschitz-v5:1}). Moreover, based
on the same approach, we will obtain a natural locally Lipschitz
version of Michael's selection theorem \cite[Theorem
3.1$'''$]{michael:56a}, see Theorems \ref{theorem-LE-Revisited-v14:1}
and \ref{theorem-Lipschitz-vgg-r1:2}. Such selections are very
efficient to show that each continuous function on a metric space is
the uniform limit of a strictly decreasing (equivalently, strictly
increasing) sequence of locally Lipschitz functions, see Corollary
\ref{corollary-Lipschitz-vgg-r1:1}.

\section{Lipschitz and Locally Lipschitz Conditions}

Let $(X,d)$ and $(Y,\rho)$ be metric spaces. To each map $f:X\to Y$ we
may associate the (possibly infinite) number $\lip(f)$ defined by
\begin{equation}
  \label{eq:uniform-ext:11}
  \lip(f)=\sup_{p\neq q}\frac{\rho(f(p),f(q))}{d(p,q)}.
\end{equation}
This number can be regarded as the general \emph{Lipschitz constant}
of $f$. Namely, $\lip(f) =+\infty$ indicates that $f$ is not
Lipschitz, and $\lip(f)=0$ indicates that $f$ is constant. Thus, the
essential case of Lipschitz maps is when $0<\lip(f)<+\infty$. \medskip

The \emph{Lipschitz constant} of a map $f:X\to Y$ at a point $p\in X$
is defined by
\begin{equation}
  \label{eq:Lipschitz-const:2}
  \lip(f,p)=\sup_{x\neq p}\frac{\rho(f(x),f(p))}{d(x,p)}.
\end{equation}
We say that $f:X\to Y$ is \emph{pointwise Lipschitz} if
$\lip(f,p)<+\infty$ for every $p\in X$. In other words, $f:X\to Y$ is
pointwise Lipschitz if and only if for each $p\in X$ there exists
$L_p\geq0$ such that
\begin{equation}
  \label{eq:Lipschitz-const:3}
  \rho(f(x),f(p))\leq L_p d(x,p)\quad \text{for every $x\in X$.}
\end{equation}

For $\delta>0$, let $\mathbf{O}(p,\delta)= \{x\in X: d(x,p)<\delta\}$
be the \emph{open $\delta$-ball centred at a point $p\in X$}.
A map $f:X\to Y$ is \emph{locally Lipschitz} if for each point
$p\in X$ there exists $\delta_p>0$ such that
$f\uhr \mathbf{O}(p,\delta_p)$ is Lipschitz. Similarly, we say that
$f:X\to Y$ is \emph{locally pointwise Lipschitz} if for each $p\in X$
there exists $K_p\geq 0$ and $\delta_p>0$ such that
\begin{equation}
  \label{eq:Lipschitz-ext:30}
  \rho(f(x),f(p))\leq K_p d(x,p)\quad \text{for every $x\in
    \mathbf{O}(p,\delta_p)$.} 
\end{equation}

The locally pointwise Lipschitz condition is naturally related to the
extended function $\lip_f:X\to [0,+\infty]$ associated to $f:X\to Y$
by letting, for $p\in X$,
\begin{equation}
  \label{eq:Lipschitz-const:1}
  \lip_f(p)=\inf_{t> 0}\left[\sup_{d(x,p)<t}
    \frac{\rho(f(x),f(p))}{t}\right] =
  \varlimsup_{t\to 0^+}\left[\sup_{d(x,p)<t}
    \frac{\rho(f(x),f(p))}{t}\right].
\end{equation}
This function is well know, see e.g.\
\cite{Cheeger1999,Keith2004a,Keith2004}, and in \cite{Balogh2004} it
was called the \emph{upper scaled oscillation} of $f$. Subsequently,
in \cite{Heinonen2007}, its values $\lip_f(p)$, $p\in X$, were called
the \emph{pointwise inﬁnitesimal Lipschitz numbers} of $f$. It follows
easily from \eqref{eq:Lipschitz-const:1} that $\lip_f(p)=0$ for each
isolated point $p\in X$, and
$\lip_f(p)=\varlimsup_{x\to p}\frac{\rho(f(x),f(p))}{d(x,p)}$
otherwise. Accordingly, $f:X\to Y$ is locally pointwise Lipschitz
precisely when $\lip_f:X\to [0,+\infty)$ is a usual
function.\medskip 

Finally, let us remark that (locally) pointwise Lipschitz maps have
been used but not explicitly defined in the literature. Namely, the
term ``pointwise Lipschitz'' is used in \cite{MR2564873} and
\cite{Gutev2020} to refer to the \emph{local property} defined in
\eqref{eq:Lipschitz-ext:30}. It was recently proposed in
\cite{Aggarwal2024,Aggarwal2023} that the local property in
\eqref{eq:Lipschitz-ext:30} be called ``pointwise locally Lipschitz'',
see \cite[Remark 2.2] { Aggarwal2024}. We will show below that our
terminology better reflects the relationship between these
Lipschitz-like conditions. In different terms, the following simple
observation was obtained in \cite[Proposition 3.1]{Gutev2020}.

\begin{proposition}
  \label{proposition-Loc-Lipschitz-v3:2}
  If $(X,d)$ and $(Y,\rho)$ are metric spaces, then each bounded
  locally pointwise Lipschitz map $f:X\to Y$ is pointwise Lipschitz.
\end{proposition}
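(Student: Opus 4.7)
The plan is to fix an arbitrary point $p\in X$ and split the estimate $\rho(f(x),f(p))\le L_p\,d(x,p)$ into two cases according to whether $x$ lies in the local Lipschitz ball around $p$ or outside it. The local pointwise Lipschitz hypothesis gives constants $K_p\ge 0$ and $\delta_p>0$ such that $\rho(f(x),f(p))\le K_p d(x,p)$ for every $x\in \mathbf{O}(p,\delta_p)$, so inside the ball nothing further is needed. Outside the ball, the boundedness of $f$ is exactly what lets us convert a bound on $\rho(f(x),f(p))$ into a multiple of $d(x,p)$, since $d(x,p)$ is bounded away from $0$ by $\delta_p$.

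More precisely, I would set $M=\sup_{x\in X}\rho(f(x),f(p))$, which is finite by the boundedness of $f$. For $x\in X$ with $d(x,p)\ge \delta_p$, I would write
\[
\rho(f(x),f(p))\le M = \frac{M}{\delta_p}\cdot \delta_p \le \frac{M}{\delta_p}\,d(x,p).
\]
Taking $L_p=\max\{K_p,\,M/\delta_p\}$ then yields $\rho(f(x),f(p))\le L_p\,d(x,p)$ for every $x\in X$ (the case $x=p$ being trivial), which is condition \eqref{eq:Lipschitz-const:3}. Consequently $\lip(f,p)\le L_p<+\infty$, so $f$ is pointwise Lipschitz.

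There is essentially no real obstacle here; the argument is a one-line trick of absorbing the global sup-distance into a Lipschitz bound using the positive lower bound $\delta_p$ on $d(x,p)$ outside the local ball. The only point to be slightly careful about is the meaning of \emph{bounded}: either interpretation (finite diameter of $f(X)$, or $\sup_{x}\rho(f(x),f(p))<+\infty$ for some, equivalently every, $p$) supplies the constant $M$ needed, and both are standard.
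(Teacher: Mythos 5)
Your argument is correct and is essentially identical to the paper's proof, which also takes $L_p=\max\{K_p,K/\delta_p\}$ with $K$ a bound on $\rho(f(x),f(p))$ and absorbs the global bound using $d(x,p)\geq\delta_p$ outside the ball. You have merely written out the case split that the paper leaves implicit.
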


\begin{proof}
  Let $p\in X$ and $\rho(f(x),f(p))\leq K$ for some $K\geq 0$ and all
  $x\in X$. Also, let $K_p\geq 0$ and $\delta_p>0$ be as in
  \eqref{eq:Lipschitz-ext:30}. Then
  $L_p= \max\left\{K_p,\frac K{\delta_p}\right\}$ is as in
  \eqref{eq:Lipschitz-const:3}.
\end{proof}

Each locally pointwise Lipschitz map is continuous, hence it is also
locally bounded. By Proposition \ref{proposition-Loc-Lipschitz-v3:2},
this implies the following immediate consequence.

\begin{corollary}
  \label{corollary-Lipschitz-const:1}
  If $(X,d)$ and $(Y,\rho)$ are metric spaces, then a map $f:X\to Y$
  is locally pointwise Lipschitz if and only if for each $p\in X$
  there exists $\delta_p>0$ such that $f\uhr \mathbf{O}(p,\delta_p)$
  is pointwise Lipschitz.
\end{corollary}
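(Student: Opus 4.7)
The plan is to deduce the corollary directly from Proposition~\ref{proposition-Loc-Lipschitz-v3:2}, using the local boundedness of locally pointwise Lipschitz maps that is flagged in the sentence preceding the statement. The ``if'' direction is immediate, so the real content is the ``only if'' direction, and this is essentially an application of the bounded-implies-pointwise-Lipschitz principle to a suitable restriction.

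For the ``only if'' direction, suppose $f:X\to Y$ is locally pointwise Lipschitz and fix $p\in X$. First I would verify that $f$ is continuous at $p$: the inequality in \eqref{eq:Lipschitz-ext:30} forces $\rho(f(x),f(p))\to 0$ as $x\to p$, so continuity (and hence local boundedness of $f$ around $p$) holds. Shrinking the $\delta_p$ provided by \eqref{eq:Lipschitz-ext:30} if necessary, I can therefore arrange that $f\uhr \mathbf{O}(p,\delta_p)$ is bounded. The restriction of $f$ to the open ball $\mathbf{O}(p,\delta_p)$ (viewed as a metric subspace of $X$) remains locally pointwise Lipschitz, because for every $q\in\mathbf{O}(p,\delta_p)$ the inequality \eqref{eq:Lipschitz-ext:30} at $q$ carries over, after possibly further shrinking the radius so that the new ball is contained in $\mathbf{O}(p,\delta_p)$. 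Applying Proposition~\ref{proposition-Loc-Lipschitz-v3:2} to this bounded, locally pointwise Lipschitz restriction yields that $f\uhr\mathbf{O}(p,\delta_p)$ is pointwise Lipschitz, which is exactly what is required.

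Conversely, if for each $p\in X$ there is some $\delta_p>0$ with $f\uhr\mathbf{O}(p,\delta_p)$ pointwise Lipschitz, then in particular $\lip(f\uhr\mathbf{O}(p,\delta_p),p)$ is finite; calling this value $L_p$, the defining supremum in \eqref{eq:Lipschitz-const:2} gives $\rho(f(x),f(p))\leq L_p d(x,p)$ for every $x\in\mathbf{O}(p,\delta_p)$, which is precisely \eqref{eq:Lipschitz-ext:30}. Hence $f$ is locally pointwise Lipschitz. The only point that requires mild care is the bookkeeping in the ``only if'' step, where a single $\delta_p$ must be chosen small enough that $f\uhr\mathbf{O}(p,\delta_p)$ is simultaneously bounded and locally pointwise Lipschitz; once this is arranged, Proposition~\ref{proposition-Loc-Lipschitz-v3:2} closes the argument.
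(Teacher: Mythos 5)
Your proposal is correct and follows exactly the paper's (one-sentence) justification: local pointwise Lipschitzness gives continuity, hence local boundedness, and Proposition~\ref{proposition-Loc-Lipschitz-v3:2} applied to the bounded restriction on a small ball yields the nontrivial direction, the converse being immediate from \eqref{eq:Lipschitz-const:2} and \eqref{eq:Lipschitz-ext:30}. The only remark is that no shrinking is needed for boundedness, since \eqref{eq:Lipschitz-ext:30} already gives $\rho(f(x),f(p))\leq K_p\delta_p$ on $\mathbf{O}(p,\delta_p)$.
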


In contrast to Proposition \ref{proposition-Loc-Lipschitz-v3:2},
bounded locally Lipschitz maps are not necessarily Lipschitz. Here is
a simple example.

\begin{example}
  \label{example-Lipschitz-const:1}
  The function $f(t)=\sin \frac 1t$, $t>0$, is clearly bounded and
  locally Lipschitz but is not Lipschitz (it is even not uniformly
  continuous). This easily follows by taking $s_n=\frac2{(4n+1)\pi}$
  and $t_n=\frac2{(4n+3)\pi}$, $n\in \N$. Then
  $|s_n-t_n|\xrightarrow[n\to \infty]{} 0$ and $|f(s_n)-f(t_n)|=2$,
  for every $n\in \N$.\qed
\end{example}

However, each locally Lipschitz map on a compact space is
Lipschitz. The following more general fact was obtained by Scanlon in
\cite[Theorem 2.1]{MR253053}.

\begin{theorem}
  \label{theorem-Lipschitz-vgg-r1:1}
  For metric spaces $(X,d)$ and $(Y,\rho)$, a map $f:X\to Y$ is
  locally Lipschitz if and only if its restriction on each compact
  subset of $X$ is Lipschitz.
\end{theorem}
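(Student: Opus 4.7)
The plan is to prove both implications, with the forward direction being the harder one because it requires a compactness argument that handles both short and long distances in the compact set.

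For the forward implication, suppose $f$ is locally Lipschitz and let $K\subset X$ be compact. For each $p\in K$, pick $\delta_p>0$ and $L_p\geq 0$ such that $f\uhr \mathbf{O}(p,\delta_p)$ is $L_p$-Lipschitz. The balls $\{\mathbf{O}(p,\delta_p/2):p\in K\}$ cover $K$, so by compactness finitely many centers $p_1,\ldots,p_n$ suffice. I would then invoke the Lebesgue number lemma to obtain $\lambda>0$ such that every pair $x,y\in K$ with $d(x,y)<\lambda$ lies inside some $\mathbf{O}(p_i,\delta_{p_i})$, which yields $\rho(f(x),f(y))\leq L\, d(x,y)$ with $L=\max_i L_{p_i}$. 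For pairs $x,y\in K$ with $d(x,y)\geq \lambda$, I use that $f$ is continuous (being locally Lipschitz), hence $f(K)$ is compact and so has finite diameter $M=\sup_{x,y\in K}\rho(f(x),f(y))$; then $\rho(f(x),f(y))\leq M\leq (M/\lambda)\, d(x,y)$. Taking the maximum of the two constants gives a Lipschitz constant for $f\uhr K$.

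For the reverse implication, assume $f$ restricted to every compact subset of $X$ is Lipschitz, and suppose toward a contradiction that $f$ is not locally Lipschitz at some point $p\in X$. Then $f\uhr \mathbf{O}(p,1/n)$ fails to be Lipschitz for every $n\in\N$, so there exist distinct points $x_n,y_n\in \mathbf{O}(p,1/n)$ with $\rho(f(x_n),f(y_n))>n\, d(x_n,y_n)$. The set $K=\{p\}\cup\{x_n,y_n:n\in\N\}$ is compact since $x_n,y_n\to p$. By hypothesis $f\uhr K$ is $L$-Lipschitz for some $L\geq 0$, contradicting $\rho(f(x_n),f(y_n))>n\, d(x_n,y_n)$ once $n>L$.

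The main obstacle is the forward direction: a naive application of compactness only controls pairs that lie together in a single small ball, and one must separately handle pairs of far-apart points in the compact set. The clean way to do this is the Lebesgue-number-plus-bounded-image argument outlined above; alternatively, one could argue by contradiction with a convergent subsequence, but the Lebesgue number approach gives an explicit constant and is conceptually cleaner. The reverse direction is essentially automatic once one produces the right compact set from a putative failure of local Lipschitzness.
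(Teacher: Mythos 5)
Your proof is correct. Note that the paper does not actually prove this statement---it is quoted from Scanlon \cite[Theorem 2.1]{MR253053}---so there is no in-paper argument to compare against. Both of your directions are sound: the forward implication correctly combines a Lebesgue number (to handle pairs $x,y\in K$ with $d(x,y)<\lambda$ via a single ball $\mathbf{O}(p_i,\delta_{p_i})$) with the boundedness of $f(K)$ (to handle pairs with $d(x,y)\geq\lambda$), and the converse correctly extracts points $x_n,y_n\to p$ violating any fixed Lipschitz constant and packages them with $p$ into a compact set.
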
                 
  
Each (pointwise) Lipschitz map is locally (pointwise) Lipschitz, and
each locally Lipschitz map is locally pointwise Lipschitz. The
converse is not true, the function
$\frac1t:(0,+\infty)\to (0,+\infty)$ is a simple example of a locally
Lipschitz function which is neither Lipschitz nor pointwise
Lipschitz. Finally, let us also remark that there are pointwise
Lipschitz maps which are not locally Lipschitz. The following example
was given in \cite[Example 2.7]{MR2564873}.

\begin{example}
  \label{example-Lipschitz-ext:5}
  Let
  $X=\left\{\left(t^3,t^2\right): -1\leq t\leq 1\right\}\subset \R^2$
  and for convenience, set $\mathbf{u}_t=\left(t^3,t^2\right)$,
  $t\in [-1,1]$. Next, define a function $f:X\to \R$ by
  $f(\mathbf{u}_t)=t^2$ if $t\geq 0$, and $f(\mathbf{u}_t)=-t^2$
  otherwise.  If $s,t\in [-1,1]$ with $s\cdot t\geq 0$ and $s\neq t$,
  then
  \[
    \frac{\left|f(\mathbf{u}_s)-f(\mathbf{u}_t)\right|}
    {\|\mathbf{u}_s-\mathbf{u}_t\|}= \frac{\left|s^2-t^2\right|}
    {\sqrt{\left(s^3-t^3\right)^2 +\left(s^2-t^2\right)^2}}\leq
    \frac{\left|s^2-t^2\right|}{\left|s^2-t^2\right|}=1.
  \]
  Accordingly, $f$ is locally pointwise Lipschitz. Hence, by
  Proposition \ref{proposition-Loc-Lipschitz-v3:2}, it is also
  pointwise Lipschitz being bounded. However, $f$ is not locally
  Lipschitz because for $t\in [-1,1]$ with $t>0$,
  $ \frac{\left|f(\mathbf{u}_{t})-f(\mathbf{u}_{-t})\right|}
  {\|\mathbf{u}_{t}-\mathbf{u}_{-t}\|}= \frac{\left|t^2+t^2\right|}
  {\sqrt{\left(t^3+t^3\right)^2 +\left(t^2-t^2\right)^2}} =
  \frac1t\xrightarrow[t\to 0]{}\infty$. \qed
\end{example}

\section{Extension of Lipschitz Functions}
\label{sec:extens-lipsch-funct}

In this section we will prove Theorem
\ref{theorem-Lipschitz-ext:20}. Obviously, this theorem is trivial
when the function $\varphi$ is constant (i.e.\ $0$-Lipschitz). So our
considerations below are for the nontrivial case of $K$-Lipschitz
functions with $K>0$.\medskip

As mentioned in the Introduction, a special case of Theorem
\ref{theorem-Lipschitz-ext:20} was pointed out in
\cite{Beer2023}. Namely, for a closed subset $A\subset X$ of a metric
space $(X,d)$ and a $K$-Lipschitz function $\varphi:A\to \R$, let
$\Phi_-$ and $\Phi_+$ be the $K$-Lipschitz extensions of $\varphi$
defined as in \eqref{eq:Lipschitz-ext:35}. Then as stated after the
proof of \cite[Theorem 3.5]{Beer2023}, the extension $\Phi_+$ is
strictly positive provided so is $\varphi:A\to \R$.  This is evident
in a more general situation because
\begin{equation}
  \label{eq:Lipschitz-ext:18}
  \begin{cases}
    \Phi_-(p)\leq \sup_{x\in A}\varphi(x)-Kd(p,A)&
    \text{and} \\
    \Phi_+(p)\geq \ \inf_{x\in A}\varphi(x)+Kd(p,A),& p\in X.
  \end{cases}
\end{equation}
Accordingly, the special case of Theorem
\ref{theorem-Lipschitz-ext:20} when the interval $\Delta\subset \R$ is
unbounded now follows with ease.

\begin{proposition}
  \label{proposition-Lipschitz-ext:1}
  Let $(X,d)$ be a metric space, $A\subset X$ be closed and
  $\Delta\subset \R$ be an unbounded interval. Then each $K$-Lipschitz
  function $\varphi:A\to \Delta$ can be extended to a $K$-Lipschitz
  function $f:X\to \Delta$.
\end{proposition}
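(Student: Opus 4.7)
The plan is to use the canonical $K$-Lipschitz extensions $\Phi_-$ and $\Phi_+$ from Theorem \ref{theorem-Lipschitz-ext:13} and simply pick whichever of the two happens to stay inside $\Delta$; which one works is dictated by the direction in which $\Delta$ is unbounded. The hypothesis that $A$ is closed enters through the observation that $d(p,A)>0$ for every $p\in X\setminus A$, which is exactly what converts the weak estimates in \eqref{eq:Lipschitz-ext:18} into the strict ones needed for half-open $\Delta$.

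First I would dispose of the trivial case $K=0$ (in which $\varphi$ is constant and extends constantly with the same value) and henceforth assume $K>0$. Next I would split on the shape of $\Delta$. If $\Delta=\R$, Theorem \ref{theorem-Lipschitz-ext:12} already gives the result. Otherwise $\Delta$ has exactly one finite endpoint $c\in\R$, and I would treat the two possibilities symmetrically: when $c$ is a left endpoint (so $\Delta$ is $[c,+\infty)$ or $(c,+\infty)$), set $f=\Phi_+$; when $c$ is a right endpoint, set $f=\Phi_-$.

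In the left-endpoint case, the second inequality in \eqref{eq:Lipschitz-ext:18} together with $\varphi(A)\subset\Delta\subset[c,+\infty)$ gives
\[
\Phi_+(p)\ \geq\ \inf_{x\in A}\varphi(x)+Kd(p,A)\ \geq\ c+Kd(p,A),\qquad p\in X.
\]
For $p\in A$ this just recovers $\Phi_+(p)=\varphi(p)\in\Delta$. For $p\in X\setminus A$, closedness of $A$ yields $d(p,A)>0$; combined with $K>0$, this forces $\Phi_+(p)>c$, so $\Phi_+(p)\in(c,+\infty)\subset\Delta$ regardless of whether $c$ itself lies in $\Delta$. Hence $\Phi_+$ is a $K$-Lipschitz extension of $\varphi$ into $\Delta$. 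The right-endpoint case is handled identically, using $\Phi_-$ and the first inequality in \eqref{eq:Lipschitz-ext:18}; alternatively, it reduces to the previous case through the identity $\Lambda_-[\varphi]=-\Lambda_+[-\varphi]$ from \eqref{eq:Lipschitz-ext:12} applied to $-\varphi:A\to -\Delta$.

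There is no real obstacle here; the only delicate point, and the reason the closedness hypothesis is invoked, is the need for the \emph{strict} inequality $\Phi_+(p)>c$ at points off $A$ when $\Delta$ omits its endpoint. Without closedness there could be $p\notin A$ with $d(p,A)=0$, and \eqref{eq:Lipschitz-ext:18} would then only give $\Phi_+(p)\geq c$, potentially escaping a half-open $\Delta$.
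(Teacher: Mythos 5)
Your proposal is correct and follows essentially the same route as the paper: the case $\Delta=\R$ via Theorem \ref{theorem-Lipschitz-ext:12}, and otherwise choosing $\Phi_+$ or $\Phi_-$ according to which endpoint of $\Delta$ is finite, with the estimates \eqref{eq:Lipschitz-ext:18} and closedness of $A$ giving the strict inequality off $A$. Your write-up merely makes explicit (the $K=0$ case and the strictness argument) what the paper's terser proof leaves implicit.
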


\begin{proof}
  The case of $\Delta=\R$ is covered by Theorem
  \ref{theorem-Lipschitz-ext:12}. If $\Delta\neq \R$, then either
  $(-\infty,r)\subset \Delta\subset (-\infty,r]$ or
  $(r,+\infty)\subset \Delta\subset [r,+\infty)$ for some $r\in
  \R$. In this case, the extension property follows from Theorem
  \ref{theorem-Lipschitz-ext:13} because by
  \eqref{eq:Lipschitz-ext:18}, either $\Phi_-:X\to \Delta$ or
  $\Phi_+:X\to \Delta$.
\end{proof}

Let $\Delta\subset \R$ be a bounded interval, i.e.\
$(a,b)\subset \Delta\subset [a,b]$ for some $a,b\in \R$ with $a<b$,
and $\varphi:A\to \Delta$ be a $K$-Lipschitz function from some closed
subset $A\subset X$ of a metric space $(X,d)$. Then by Theorem
\ref{theorem-Lipschitz-ext:12}, $\varphi$ can be extended to a
$K$-Lipschitz function $f:X\to \R$. Hence, $\varphi$ can also be
extended to a $K$-Lipschitz function $g:X\to [a,b]$ by taking
$g(x)=f(x)$ if $f(x)\in [a,b]$, $g(x)=a$ if $f(x)<a$ and $g(x)=b$ if
$f(x)>b$, see \cite[Remark 4.1.2]{zbMATH07045619}. However, some of
the values of $g$ may leave the interval $\Delta$. In contrast,
applying the same construction to each of the $K$-Lipschitz extensions
defined in \eqref{eq:Lipschitz-ext:35}, the arithmetic mean of the
resulting functions is a $K$-Lipschitz extension of $\varphi$ in the
interval $\Delta$. This covers the remaining case in Theorem
\ref{theorem-Lipschitz-ext:20}, in fact the following more general
result holds.

\begin{proposition}
  \label{proposition-Lipschitz-ext:11}
  Let $(X,d)$ be a metric space, $A\subset X$ be a closed set and
  $a,b\in \R$ with $a<b$. Then each $K$-Lipschitz function
  $\varphi:A\to [a,b]$ can be extended to a $K$-Lipschitz function
  $f:X\to [a,b]$ such that $f\left(X\setminus A\right)\subset (a,b)$.
\end{proposition}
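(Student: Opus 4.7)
The plan is to follow the recipe sketched in the paragraph preceding the proposition: apply a two-sided truncation at $a$ and $b$ to each of the canonical extensions $\Phi_-$ and $\Phi_+$ from \eqref{eq:Lipschitz-ext:35}, and then take the arithmetic mean. The advantage of averaging the two truncated extensions — rather than using either of them alone — is that although each truncated function may still attain the boundary value $a$ or $b$ on $X\setminus A$, the two extensions can never do so simultaneously at the same point, so the mean is forced into the open interval $(a,b)$.

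Concretely, first I would define $\Phi_-,\Phi_+:X\to \R$ by the formulas \eqref{eq:Lipschitz-ext:35}; by Theorem \ref{theorem-Lipschitz-ext:13} these are $K$-Lipschitz extensions of $\varphi$. Next I would form the truncations
\[
  g_-(p)=\max\{a,\min\{b,\Phi_-(p)\}\},\qquad
  g_+(p)=\max\{a,\min\{b,\Phi_+(p)\}\},\quad p\in X.
\]
Composing a $K$-Lipschitz function with the $1$-Lipschitz retraction $\R\to[a,b]$ keeps the Lipschitz constant, and since $\varphi(A)\subset[a,b]$ the truncations still extend $\varphi$; hence $g_-,g_+:X\to[a,b]$ are $K$-Lipschitz extensions of $\varphi$. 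Finally I would set $f=\tfrac12(g_-+g_+)$, which is a $K$-Lipschitz extension of $\varphi$ taking values in $[a,b]$ as a convex combination of such functions.

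The remaining and only delicate point is to verify that $f(X\setminus A)\subset(a,b)$, and this is where the inequalities \eqref{eq:Lipschitz-ext:18} are decisive. For $p\in X\setminus A$ the closedness of $A$ gives $d(p,A)>0$, and since $\varphi(A)\subset[a,b]$, \eqref{eq:Lipschitz-ext:18} yields
\[
  \Phi_-(p)\leq b-Kd(p,A)<b\qquad\text{and}\qquad \Phi_+(p)\geq a+Kd(p,A)>a.
\]
Inspecting the definition of the truncation, $\Phi_-(p)<b$ forces $g_-(p)<b$ (either $g_-(p)=\Phi_-(p)<b$ or $g_-(p)=a<b$), while $\Phi_+(p)>a$ forces $g_+(p)>a$; combined with the trivial bounds $g_-(p)\geq a$ and $g_+(p)\leq b$, we obtain $a<f(p)<b$.

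I do not anticipate any real obstacle here: the construction is elementary and the only subtle point, namely that both $g_-$ and $g_+$ can touch a boundary but not the same one, is exactly what \eqref{eq:Lipschitz-ext:18} rules out, so the averaging step is bound to succeed.
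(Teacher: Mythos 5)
Your proposal is correct and is essentially the paper's own proof: the paper sets $\Psi_-=\max\{\Phi_-,a\}$ and $\Psi_+=\min\{\Phi_+,b\}$ and averages, using \eqref{eq:Lipschitz-ext:18} exactly as you do to see that on $X\setminus A$ neither summand can reach the ``wrong'' endpoint. Your two-sided truncations coincide with the paper's one-sided ones, since \eqref{eq:Lipschitz-ext:18} already gives $\Phi_-\leq b$ and $\Phi_+\geq a$ everywhere.
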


\begin{proof}
  We argue as in \cite[Theorem II]{MR71493}. Briefly, let
  $\varphi:A\to [a,b]$ be a $K$-Lipschitz function and
  $\Phi_-,\Phi_+:X\to \R$ be the $K$-Lipschitz extensions of $\varphi$
  defined as in \eqref{eq:Lipschitz-ext:35}, see Theorem
  \ref{theorem-Lipschitz-ext:13}.  Then, according to
  \eqref{eq:Lipschitz-ext:18}, we can define two other $K$-Lipschitz
  extensions $\Psi_-,\Psi_+:X\to [a,b]$ by
  \[
    \Psi_-(p)=\max\{\Phi_-(p),a\}\ \ \text{and}\ \ 
    \Psi_+(p)=\min\{\Phi_+(p),b\},\quad \text{$p\in X$.}   
  \]
  The function $f=\frac{\Psi_-+\Psi_+}2:X\to [a,b]$ is now as
  required. Indeed, if $p\in X\setminus A$, then $d(p,A)>0$ and by
  \eqref{eq:Lipschitz-ext:18},
  $f(p)=\frac{\Psi_-(p)+\Psi_+(p)}2\in (a,b)$.
\end{proof}

Finally, let us note that in Theorem \ref{theorem-Lipschitz-ext:20},
the requirement that the subset $A\subset X$ be closed cannot be
dropped. For instance, take any open interval
$\Delta=(a,b)\subset \R$, $a<b$. Then the identity function
$\varphi:\Delta\to \Delta$ is clearly Lipschitz, but cannot be
extended even to a continuous function $f:\R\to \Delta$.

\section{Extension of Pointwise Lipschitz Functions}
\label{sec:extens-pointw-lipsch}

In 1955, Czipszer and Geh\'er \cite{MR71493} applied the construction
in Theorem \ref{theorem-Lipschitz-ext:13} to show that the
McShane-Whitney extension theorem is also valid for (locally)
pointwise Lipschitz functions. Namely, in \cite[Theorem II]{MR71493}
they obtained the special case of open symmetric intervals
$\Delta\subset \R$ of the following more general extension result,
compare with Theorem \ref{theorem-Lipschitz-ext:20}.

\begin{theorem}
  \label{theorem-Lipschitz-ext:11}
  Let $(X,d)$ be a metric space, $A\subset X$ be a closed set and
  $\Delta\subset \R$ be an interval. Then each locally pointwise
  Lipschitz function $\varphi:A\to \Delta$ can be extended to a
  locally pointwise Lipschitz function $f:X\to \Delta$.
\end{theorem}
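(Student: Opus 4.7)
The plan is to follow the same two-step scheme as in Section~\ref{sec:extens-lipsch-funct}, splitting according to whether $\Delta$ is unbounded or bounded, but with the $K$-Lipschitz extensions of Theorem~\ref{theorem-Lipschitz-ext:13} replaced by their locally pointwise Lipschitz counterpart from \cite[Theorem~3.2]{Gutev2020}. That result assigns, to every locally pointwise Lipschitz function $\varphi:A\to\R$ on a closed $A\subset X$, two locally pointwise Lipschitz extensions $\Phi_-,\Phi_+:X\to\R$ with $\Phi_-\leq\Phi_+$, defined by suprema and infima analogous to \eqref{eq:Lipschitz-ext:35} in which the single constant $K$ is replaced by suitable local Lipschitz constants $L_x$ attached to the points $x\in A$. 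This already settles the case $\Delta=\R$.

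For an unbounded interval $\Delta\neq\R$, I would argue as in Proposition~\ref{proposition-Lipschitz-ext:1}. The analogue of \eqref{eq:Lipschitz-ext:18} should give, on $X\setminus A$, the strict inequalities $\Phi_-(p)<\sup_A\varphi$ and $\Phi_+(p)>\inf_A\varphi$; then choosing $\Phi_-$ or $\Phi_+$ according to which end of $\Delta$ is unbounded yields an extension with values in $\Delta$. For a bounded interval $\Delta\subset[a,b]$, I would copy the strategy of Proposition~\ref{proposition-Lipschitz-ext:11}: first truncate $\Phi_\pm$ to $\Psi_-=\max\{\Phi_-,a\}$ and $\Psi_+=\min\{\Phi_+,b\}$\,---\,both locally pointwise Lipschitz, as compositions with $1$-Lipschitz truncations, and both extending $\varphi$ into $[a,b]$\,---\,and then take their arithmetic mean $f=\tfrac{\Psi_-+\Psi_+}{2}$. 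The same strict inequalities then force $f(p)\in(a,b)$ whenever $p\in X\setminus A$, so $f:X\to\Delta$ regardless of which endpoints of $\Delta$ are included.

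The main obstacle is exactly this strict-inequality step. In the $K$-Lipschitz setting it was immediate from \eqref{eq:Lipschitz-ext:18} because the single positive $K$ produced a uniform slope $K\,d(p,A)>0$ separating $\Phi_\pm$ from $\sup_A\varphi$ and $\inf_A\varphi$. Here the local constants $L_x$ vary from point to point and can tend to $0$ as $\varphi(x)$ approaches $\inf_A\varphi$ (respectively $\sup_A\varphi$), so no such uniform slope is available. The remedy I would pursue is a limiting argument: for $p\in X\setminus A$, a minimizing sequence $x_n\in A$ for $\Phi_+(p)$ with $\varphi(x_n)+L_{x_n}\,d(x_n,p)\to\inf_A\varphi$ must have both $\varphi(x_n)\to\inf_A\varphi$ and, since $d(x_n,p)\geq d(p,A)>0$, also $L_{x_n}\to 0$; using the closedness of $A$, the continuity of $\varphi$, and the defining formula \eqref{eq:Lipschitz-const:1} for $\lip_\varphi$, this should pin the sequence down to a point $x_*\in A$ at which $\varphi$ is locally constant, reducing the extension problem on a neighborhood of $x_*$ to a trivial subcase that can be handled separately.
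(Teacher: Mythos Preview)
Your plan is close to the paper's, but the ``main obstacle'' you identify is an artifact of not normalising the constants. In the bounded case, a locally pointwise Lipschitz $\varphi:A\to[a,b]$ is automatically \emph{pointwise} Lipschitz by Proposition~\ref{proposition-Loc-Lipschitz-v3:2}, so you may take the global constants $L_x$ from \eqref{eq:Lipschitz-const:3} and, crucially, enlarge each of them to satisfy $L_x\geq 1$ at no cost. With this choice the analogue of \eqref{eq:Lipschitz-ext:18} becomes $\Phi_+(p)\geq a+d(p,A)$ and $\Phi_-(p)\leq b-d(p,A)$, and the strict inequality on $X\setminus A$ is immediate. This is exactly what the paper does in Theorem~\ref{theorem-Loc-Lipschitz-v17:1}; your truncation-and-averaging step then goes through verbatim.

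Your limiting argument, by contrast, does not stand as written: there is no compactness available to force the minimising sequence $(x_n)$ to converge in $A$, and $L_{x_n}\to 0$ combined with $\varphi(x_n)\to\inf_A\varphi$ is perfectly compatible with $x_n$ escaping to infinity in $A$, in which case nothing is ``pinned down''. So this step is a genuine gap, not merely a sketch.

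For unbounded $\Delta$ the paper takes a different route from yours: rather than work with $\Phi_\pm$ directly, it composes $\varphi$ with a Lipschitz homeomorphism onto a bounded interval ($\arctan$ when $\Delta=\R$, or $t\mapsto 1/t$ after a shift when $\Delta$ is a half-line), applies the bounded case to obtain a pointwise Lipschitz extension into an open subinterval, and composes back with the locally Lipschitz inverse. This sidesteps the strict-inequality issue entirely and avoids having to control the formulas \eqref{eq:Loc-Lipschitz-v4:2} when $\varphi$ is unbounded and hence genuinely only \emph{locally} pointwise Lipschitz---a setting in which global constants $L_x$ as in \eqref{eq:Lipschitz-const:3} need not exist, so the normalisation trick is not directly available to you.
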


The proof of Theorem \ref{theorem-Lipschitz-ext:11} is similar to that
of \cite[Theorem II]{MR71493}, see also \cite[Theorem 3.2 and Remark
3.5]{Gutev2020}. In fact, the most difficult part of this proof is the
case when $\Delta\subset\R$ is a bounded interval and in particular
the verification that the extension is locally (pointwise) Lipschitz
at $X\setminus A$. For a bounded interval $\Delta\subset\R$, according
to Proposition \ref{proposition-Loc-Lipschitz-v3:2}, each locally
pointwise Lipschitz function $\varphi:A\to \Delta$ is pointwise
Lipschitz. Thus, the essential part of the proof of Theorem
\ref{theorem-Lipschitz-ext:11} is actually based on the following
extension result for pointwise Lipschitz functions, compare with
Proposition \ref{proposition-Lipschitz-ext:11}.

\begin{theorem}
  \label{theorem-Loc-Lipschitz-v17:1}
  Let $(X,d)$ be a metric space, $A\subset X$ be a closed set and
  $a,b\in \R$ with $a<b$. Then each pointwise Lipschitz function
  $\varphi:A\to [a,b]$ can be extended to a pointwise Lipschitz
  function $f:X\to [a,b]$ such that $f(X\setminus A)\subset (a,b)$.
\end{theorem}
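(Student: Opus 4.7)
The strategy would be to adapt the proof of Proposition \ref{proposition-Lipschitz-ext:11}, replacing the uniform Lipschitz constant $K$ by the pointwise Lipschitz numbers $L(x)=\lip(\varphi,x)$, $x\in A$, and augmenting them by a small positive constant $\varepsilon$ in order to preserve the analogue of the strict inequalities \eqref{eq:Lipschitz-ext:18} needed for the final step. Concretely, I would define
\[
  \Phi_-(p)=\sup_{x\in A}\left[\varphi(x)-(L(x)+\varepsilon)d(x,p)\right],\qquad
  \Phi_+(p)=\inf_{x\in A}\left[\varphi(x)+(L(x)+\varepsilon)d(x,p)\right],
\]
then let $\Psi_-=\max\{\Phi_-,a\}$, $\Psi_+=\min\{\Phi_+,b\}$, and finally set $f=(\Psi_-+\Psi_+)/2$. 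The identities $\Phi_-(p)=\Phi_+(p)=\varphi(p)$ for $p\in A$ are immediate (inf and sup are attained at $x=p$), while $\Phi_-\le \Phi_+$ on $X$ follows from the observation that the pointwise Lipschitz bound is valid at either endpoint, giving $|\varphi(x)-\varphi(y)|\le \min\{L(x),L(y)\}\,d(x,y)$, which then splits across the triangle inequality as $L(x)d(x,p)+L(y)d(y,p)$.

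The technical heart is verifying that $\Phi_\pm$ are pointwise Lipschitz on $X$. At a point $p\in A$, I would prove $|\Phi_+(q)-\Phi_+(p)|\le (L(p)+\varepsilon)d(p,q)$ for every $q\in X$ by a short case split on each $x\in A$: if $L(x)\ge L(p)$, use the pointwise Lipschitz inequality at $p$ to bound $\varphi(x)\ge \varphi(p)-L(p)d(x,p)$, absorb $d(x,p)$ through $d(x,p)\le d(x,q)+d(p,q)$, and observe that the surviving $[L(x)-L(p)]d(x,q)$ term is nonnegative; if instead $L(x)<L(p)$, apply the pointwise Lipschitz inequality at $x$ and use $L(x)\le L(p)$ to absorb $-L(x)d(p,q)\ge -L(p)d(p,q)$. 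Taking infima over $x$ then yields the lower bound, and the upper one is immediate with $x=p$. At a point $p\in X\setminus A$, any $\delta$-near-minimizer $x_\delta$ of $\Phi_+(p)$ satisfies $(L(x_\delta)+\varepsilon)d(x_\delta,p)\le \Phi_+(p)+\delta-\varphi(x_\delta)\le b-a+\delta$; since $d(x_\delta,p)\ge d(p,A)>0$, this forces $L(x_\delta)+\varepsilon\le (b-a+\delta)/d(p,A)$, and the usual triangle-inequality argument then shows $\Phi_+$ is locally Lipschitz near $p$ with constant of order $(b-a)/d(p,A)$. The symmetric reasoning handles $\Phi_-$.

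Consequently, $\Phi_\pm$ are locally pointwise Lipschitz on $X$, hence so are the truncations $\Psi_\pm$; being bounded in $[a,b]$, Proposition \ref{proposition-Loc-Lipschitz-v3:2} promotes them to pointwise Lipschitz functions, and therefore so is $f$. The final strict inequality $f(p)\in(a,b)$ for $p\notin A$ is the payoff of the $\varepsilon$-shift: for such $p$, every term $\varphi(x)+(L(x)+\varepsilon)d(x,p)$ is at least $a+\varepsilon\, d(p,A)>a$, so $\Phi_+(p)>a$ and hence $\Psi_+(p)>a$; symmetrically $\Psi_-(p)<b$. Together with $\Psi_-\le \Psi_+$ and $\Psi_\pm\in[a,b]$, this forces $f(p)=(\Psi_-(p)+\Psi_+(p))/2\in(a,b)$. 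The main obstacle I expect is precisely the case-split verification of pointwise Lipschitzness at points of $A$: the naive attempt using only one of the pointwise Lipschitz bounds (at $x$ or at $p$) fails because $L$ may well be unbounded on $A$, and one must play the two inequalities against each other according to which of $L(x)$, $L(p)$ is the smaller.
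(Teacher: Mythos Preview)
Your proposal is correct and follows essentially the same route as the paper: define the McShane--Whitney type extensions $\Phi_\pm$ with variable constants, truncate to $[a,b]$, average, and invoke Proposition~\ref{proposition-Loc-Lipschitz-v3:2}. Two small differences are worth noting: the paper secures the strict inequalities at $p\notin A$ by simply choosing the constants $L_x\geq 1$ rather than via an $\varepsilon$-shift, and it bypasses your case-split at $p\in A$ entirely by observing that $\Phi_+(p)-L_p\,d(p,q)=\varphi(p)-L_p\,d(p,q)\leq \Phi_-(q)\leq \Phi_+(q)$, so the lower bound on $\Phi_+(q)$ drops out for free from the already-established inequality $\Phi_-\leq\Phi_+$.
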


\begin{proof}
  Let $\varphi:A\to [a,b]$ be a pointwise Lipschitz function and
  $L_x\geq 1$, $x\in A$, be as in \eqref{eq:Lipschitz-const:3}. Then
  as in \eqref{eq:Lipschitz-ext:35}, we can associate the functions
  $\Phi_-:X\to (-\infty,b]$ and $\Phi_+:X\to [a,+\infty)$ defined by
  \begin{equation}
    \label{eq:Loc-Lipschitz-v4:2}
    \begin{cases}
      \Phi_-(p)=\sup_{x\in A}\left[\varphi(x)-L_x d(x,p)\right]
      &\text{and}\\
      \Phi_+(p)=\ \inf_{x\in A}\left[\varphi(x)+L_x d(x,p)\right],
      &\text{for every $p\in X$.}
    \end{cases}
  \end{equation}

  In this more general setting, just as in Theorem
  \ref{theorem-Lipschitz-ext:13}, we have that
  \begin{equation}
    \label{eq:Loc-Lipschitz-v18:1}
    \Phi_-\leq \Phi_+\quad\text{and}\quad \Phi_-\uhr
    A=\varphi=\Phi_+\uhr A. 
  \end{equation}
  Indeed, for $x,y\in A$, let $L_{xy}=\min\{L_x,L_y\}$. Then for every
  $p\in X$, it follows from \eqref{eq:Lipschitz-const:3} that
  $\varphi(x)-\varphi(y)\leq L_{xy} d(x,y)\leq L_x d(x,p) + L_y
  d(p,y)$.  In other words,
  $\varphi(x)-L_x d(x,p)\leq \varphi(y)+L_y d(y,p)$ which is clearly
  equivalent to $\Phi_-(p)\leq \Phi_+(p)$. Similarly, taking $p\in A$,
  it follows from \eqref{eq:Loc-Lipschitz-v4:2} that
  $\varphi(p)\leq \Phi_-(p)\leq \Phi_+(p)\leq \varphi(p)$, therefore
  $\Phi_-\uhr A=\varphi= \Phi_+\uhr A$. \medskip

  Finally, as might be expected, $\Phi_-$ and $\Phi_+$ have the same
  property as bounded Lipschitz functions, see
  \eqref{eq:Lipschitz-ext:18}. Namely, since $L_x\geq 1$, $x\in A$,
  \eqref{eq:Loc-Lipschitz-v4:2} implies that
  \begin{equation}
    \label{eq:Lip-Like-Ext:2}    
    \begin{cases}
      \Phi_-(p)\leq b-d(p,A) &\text{and} \\
      \Phi_+(p)\geq a+d(p,A), &\text{for every $p\in X$.}
    \end{cases}
  \end{equation}

  The essential part of the proof now is to show that the extensions
  $\Phi_-$ and $\Phi_+$ are locally pointwise Lipschitz. Since
  $\lip(-\varphi,x)=\lip(\varphi,x)$ for every $x\in A$, see
  \eqref{eq:Lipschitz-const:2}, just like in the proof of Theorem
  \ref{theorem-Lipschitz-ext:13}, it suffices to show that one of
  these functions, say $\Phi_+$, is locally pointwise Lipschitz. So,
  take a point $p\in X$. \medskip

  If $p\in A$ and $q\in X$, then by \eqref{eq:Loc-Lipschitz-v18:1},
  $\Phi_-(q)\leq \Phi_+(q)$ and $\Phi_+(p)=\varphi(p)$. Hence,
  \eqref{eq:Loc-Lipschitz-v4:2} implies that
  $\Phi_+(p)-L_p d(p,q)\leq \Phi_-(q)\leq \Phi_+(q)\leq \Phi_+(p)+ L_p
  d(p,q)$.  Thus, $|\Phi_+(p)-\Phi_+(q)|\leq L_p d(p,q)$.\medskip

  \noindent\begin{minipage}{0.5\linewidth}
    If $p\notin A$, then $\Phi_+$ is bounded in the open $\delta$-ball
    $\mathbf{O}(p,\delta)$, where $\delta=\frac{d(p,A)}2>0$. Indeed,
    fix a point $\varkappa\in A\cap \mathbf{O}(p,3\delta)$ and set
    ${K=L_{\varkappa}}$. Then for each $z\in \mathbf{O}(p,\delta)$, it
    follows from \eqref{eq:Loc-Lipschitz-v4:2} that
  \end{minipage}
  \begin{minipage}[c][60pt][t]{0.45\linewidth}
    \hspace{25pt}
    \begin{tikzpicture}[scale=1, every node/.style={scale=0.9}]

      \filldraw[black!10!white,line width=1pt] (3,1.5) .. controls
      (1,0) and (2,-1) .. (5.5,1.5);

      \draw[black!50!white,line width=1pt] (3,1.5) .. controls (1,0)
      and (2,-1) .. (5.5,1.5) (3.5,1) node[anchor=center] {$A$};

      \draw [darkgray, line width=.4pt, dashed] (1.4,.8) circle
      [radius=20pt];

      \filldraw[black!10!white] (1.4,.8) circle [radius=10pt];

      \draw [darkgray, line width=.4pt] (1.4,.8) circle [radius=32pt]
      (1.4,.8) circle [radius=10pt];

      \filldraw [darkgray] (1.4,.8) circle [radius=.6pt]
      node[anchor=north] {\scriptsize $p$} (1.99,1.55)
      node[anchor=north,rotate=30] {\scriptsize $3\delta$} (1.85,.57)
      node[anchor=north,rotate=-32] {\scriptsize $2\delta$};

      \draw [darkgray,line width=.4pt, ->] (1.4,.8) -- (2.4,1.33);

      \draw [darkgray,line width=.4pt, ->] (1.4,.8) -- (2,.46);

      \draw [darkgray,line width=.5pt, ->] (1.4,.8) -- (1.4,1.14);

      \draw (1.38,.99) node[anchor=west,,scale=0.8] {\tiny $\delta$};

      \filldraw [black] (2.3,.5) circle [radius=.6pt] (2.18,.54)
      node[anchor=north] {\scriptsize $\varkappa$} (1.2,.75) circle
      [radius=.6pt] (1.25,.7) node[anchor=south] {\scriptsize $z$};
    \end{tikzpicture}
  \end{minipage}
  \begin{equation}
    \label{eq:Loc-Lipschitz-v19:1}
    \Phi_+(z)\leq \varphi(\varkappa)+K d(\varkappa,z) \leq b+ 4\delta
    K.
  \end{equation}
  This implies that for each $z\in \mathbf{O}(p,\delta)$ and
  $\varepsilon>0$, there exists a point $x\in A$ with
  \begin{equation}
    \label{eq:LE-Revisited-v12:1}
    \Phi_+(z)+\varepsilon> \varphi(x)+ L_xd(x,z)\quad\text{and} \quad
    L_x \leq \frac{b-a+\varepsilon}\delta+4K.
  \end{equation}
  Namely, $\Phi_+(z)+\varepsilon>\Phi_+(z)$ and by
  \eqref{eq:Loc-Lipschitz-v4:2},
  $\Phi_+(z)+\varepsilon\geq \varphi(x)+ L_x d(x,z)$ for some point
  $x\in A$. Since $\varphi(x)\geq a$ and
  $d(x,z)\geq d(z,A)\geq \delta$, it follows from
  \eqref{eq:Loc-Lipschitz-v19:1} that
  $L_x\leq \frac{\Phi_+(z)-\varphi(x)+\varepsilon}{d(x,z)} \leq
  \frac{b+4\delta K-a+\varepsilon}{\delta}\leq
  \frac{b-a+\varepsilon}\delta+4K$.\medskip

  Take now $y,z\in \mathbf{O}(p,\delta)$ and $\varepsilon>0$. Next,
  let $x\in A$ be as in \eqref{eq:LE-Revisited-v12:1} with respect to
  this $\varepsilon>0$ and the point $z$. Since
  $\Phi_+(y)\leq \varphi(x)+L_x d(x,y)$, we get that
  \begin{align*}
    \Phi_+(y)-\Phi_+(z)
    &\leq \varphi(x)+L_x d(x,y)-\varphi(x)-L_x
      d(x,z)+\varepsilon\\ 
    &\leq L_x d(y,z)+\varepsilon\leq
      \left[\frac{b-a+\varepsilon}\delta
      +4K\right] d(y,z)+ 
      \varepsilon.
  \end{align*}
  Accordingly,
  $\Phi_+(y)-\Phi_+(z)\leq \left[\frac{b-a}\delta +4K\right] d(y,z)$
  and $\Phi_+$ is Lipschitz in $\mathbf{O}(p,\delta)$. Thus, $\Phi_+$
  is locally pointwise Lipschitz. \medskip

  The proof now concludes in the same way as that of Proposition
  \ref{proposition-Lipschitz-ext:11}. Briefly, define two other
  locally pointwise Lipschitz extensions $\Psi_-,\Psi_+:X\to [a,b]$ by
  $\Psi_-(p)=\max\{\Phi_-(p),a\}$ and $\Psi_+(p)=\min\{\Phi_+(p),b\}$
  for every $p\in X$.  Then by Proposition
  \ref{proposition-Loc-Lipschitz-v3:2}, $\Psi_-$ and $\Psi_+$ are
  pointwise Lipschitz being bounded functions. Hence,
  $f=\frac{\Psi_-+\Psi_+}2:X\to [a,b]$ is also a pointwise Lipschitz
  extension of $\varphi$. This $f$ is as required. Indeed, if
  $p\in X\setminus A$, then $d(p,A)>0$ and by
  \eqref{eq:Lip-Like-Ext:2}, $\Psi_-(p)<b$ and
  $\Psi_+(p)>a$. Accordingly,
  $f(p)=\frac{\Psi_-(p)+\Psi_+(p)}2\in (a,b)$.
\end{proof}

Theorem \ref{theorem-Lipschitz-ext:11} now follows from Theorem
\ref{theorem-Loc-Lipschitz-v17:1} by using suitable locally Lipschitz
homeomorphisms between bounded and unbounded intervals.

\begin{proof}[Proof of Theorem \ref{theorem-Lipschitz-ext:11}]
  The case of a bounded interval $\Delta\subset \R$ is covered by
  Theorem \ref{theorem-Lipschitz-ext:11}. Suppose that
  $\Delta\subset \R$ is unbounded and $\varphi:A\to \Delta$ is a
  locally pointwise Lipschitz function.  If $\Delta=\R$, then as in
  \cite{MR71493}, we can use the fact that
  $\arctan:\R\to\left(-\frac\pi2,\frac\pi2\right)$ is Lipschitz, while
  its inverse ${\tan:\left(-\frac\pi2,\frac\pi2\right)\to \R}$ is
  locally Lipschitz. Thus, the composite function
  $\psi=\arctan\circ \varphi:A\to \left(-\frac\pi2,\frac\pi2\right)$
  is also locally pointwise Lipschitz.  Hence, by Proposition
  \ref{proposition-Loc-Lipschitz-v3:2} and Theorem
  \ref{theorem-Loc-Lipschitz-v17:1}, it can be extended to a pointwise
  Lipschitz function $g:X\to
  \left(-\frac\pi2,\frac\pi2\right)$. Finally, in this case,
  $f=\tan\circ g:X\to \R$ is the required extension of
  $\varphi$.\smallskip

  If $\Delta\neq \R$, then using an isometry (if necessary), the proof
  is reduced to the case when
  $(1,+\infty)\subset \Delta\subset [1,+\infty)$. Since the function
  $\frac1t:(0,+\infty)\to (0,+\infty)$ is locally Lipschitz, the
  function $\psi=\frac1\varphi:A\to (0,1]$ is locally pointwise
  Lipschitz. Hence, just like before, it can be extended to a
  pointwise Lipschitz function ${g:X\to (0,1]}$ with
  $g(X\setminus A)\subset (0,1)$. Now we can take
  $f=\frac 1g:X\to \Delta$, which is clearly a locally pointwise
  Lipschitz extension of $\varphi$.
\end{proof}

Regarding Theorems \ref{theorem-Lipschitz-ext:11} and
\ref{theorem-Loc-Lipschitz-v17:1}, let us remark that just as in Theorem
\ref{theorem-Lipschitz-ext:20}, the requirement $A\subset X$ to be
closed cannot be dropped. For instance, the function
$\frac{t}{|t|}:[-1,0)\cup(0,1]\to \{-1,1\}$ is clearly pointwise
Lipschitz, but cannot be extended continuously on $[-1,1]$.

\section{Lipschitz and Locally Lipschitz Partitions of Unity}
\label{sec:lipsch-locally-lipsc}

The \emph{cozero set}, or the \emph{set-theoretic support}, of a
function $\xi : X\to \R$ is the set
$\coz(\xi) = \{x\in X : \xi(x)\neq 0\}$. For a space $X$, the
\emph{support} of a function $\xi:X\to \R$ is the set
$\supp(\xi)=\overline{\coz(\xi)}$. A cover $\mathscr{U}$ of a space
(set) $X$ is a \emph{refinement} of another cover $\mathscr{V}$ of $X$
if each $U\in \mathscr{U}$ is contained in some $V\in
\mathscr{V}$. Finally, let us recall that a partition of unity
$\xi_\alpha:X\to [0,1]$, $\alpha\in \mathscr{A}$, on $X$ is
\emph{subordinated} to a cover $\mathscr{V}$ of $X$ if the cover
$\left\{\supp(\xi_\alpha):\alpha\in\mathscr{A}\right\}$ is a
refinement of $\mathscr{V}$, in which case we also say that
$\mathscr{V}$ admits a partition of unity. \medskip

It was shown by Zden\v{e}k Frol\'{\i}k in \cite[Theorem 1]{MR814046}
that each open cover of a metric space admits a locally finite
partition of unity consisting of Lipschitz functions. This was based
on A. H. Stone's theorem \cite[Corollary 1]{stone:48} that each
metrizable space is paracompact. Here, we will give a direct proof of
the special case of countable covers (\cite[Lemma]{MR814046}) without
using Stone's result. Namely, we will prove the following theorem.

\begin{theorem}
  \label{theorem-Loc-Lipschitz-Ext:2}
  Each countable open cover of a metric space admits a locally finite
  countable partition of unity consisting of Lipschitz functions.
\end{theorem}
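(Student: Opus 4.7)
The plan is to give an explicit telescoping-product construction that sidesteps any paracompactness argument. Let $\{U_n\}_{n\in \N}$ be the given countable open cover. First, for each pair $(n,m)\in \N\times \N$, I would take a Lipschitz bump $h_{n,m}:X\to [0,1]$ that vanishes outside $U_n$ and saturates at $1$ on an open ``core'' deep inside $U_n$. A concrete choice is
\[
h_{n,m}(x)=\min\bigl\{1,\max\{0,\, m\,d(x,X\setminus U_n)-1\}\bigr\},
\]
which is $m$-Lipschitz, satisfies $\supp(h_{n,m})\subset\{x\in X:d(x,X\setminus U_n)\geq 1/m\}\subset U_n$, and equals $1$ on the open set $T_{n,m}=\{x\in X:d(x,X\setminus U_n)>2/m\}$. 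Since every point of $X$ lies in some $U_n$ at positive distance from $X\setminus U_n$, the family $\{T_{n,m}\}$ still covers $X$.

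Next, I would enumerate $\{h_{n,m}\}$ as a single sequence $(h_k)_{k\in \N}$ (with corresponding open sets $T_k$ and cover indices $n_k$), and form the telescoping products
\[
\xi_k(x)=h_k(x)\prod_{j<k}\bigl(1-h_j(x)\bigr), \qquad k\in \N.
\]
Each $\xi_k$ is a finite product of $[0,1]$-valued Lipschitz functions, hence globally Lipschitz; and $\supp(\xi_k)\subset \supp(h_k)\subset U_{n_k}$, so the family is automatically subordinated to $\{U_n\}$.

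The partition and local-finiteness properties both follow from the telescoping identity
\[
\sum_{k=1}^N \xi_k(x)=1-\prod_{k=1}^N\bigl(1-h_k(x)\bigr),
\]
which I would prove by a one-line induction on $N$. For each fixed $x$, pick any $K$ with $x\in T_K$; then $h_K(x)=1$, the right-hand side is already zero for every $N\geq K$, and hence $\sum_k \xi_k(x)=1$. For local finiteness, the \emph{same} open neighborhood $T_K$ of $x$ has $h_K\equiv 1$ throughout, so the factor $(1-h_K)$ is identically zero on $T_K$, which forces $\xi_k\equiv 0$ on $T_K$ for every $k>K$; hence $T_K$ meets only finitely many of the supports.

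The main point, which lets me avoid Stone's theorem entirely, is precisely this last observation: local finiteness is enforced automatically by the vanishing factor $(1-h_K)$ on the open set where some $h_K$ first saturates at $1$, with no need to first extract a locally finite open refinement of $\{U_n\}$. The only subtlety worth flagging is the double-scaling in the definition of $h_{n,m}$ (clamping $m\,d-1$ rather than $m\,d$ to $[0,1]$), which is what guarantees $\supp(h_{n,m})\subset U_n$ strictly, and hence genuine subordination via supports in the sense of the paper.
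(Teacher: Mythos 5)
Your proof is correct, and it takes a genuinely different route from the paper. The paper first builds a locally finite open shrinking $\{U_n\}$ with $\overline{U_n}\subset V_n$ via Mather's trick (Proposition \ref{proposition-Loc-Lipschitz-Ext:2}), then normalises a Lipschitz sum $\eta=\sum_n\eta_n$ by writing $\frac1t$ as a locally finite sum of Lipschitz functions $\ell_k$ (Proposition \ref{proposition-Loc-Lipschitz-Ext:4}), and sets $\xi_{nk}=\eta_n\cdot(\ell_k\circ\eta)$; the whole difficulty is thus concentrated in avoiding the non-Lipschitz division by $\eta$. You avoid normalisation altogether: the telescoping products $\xi_k=h_k\prod_{j<k}(1-h_j)$ sum to $1-\prod_{k\le N}(1-h_k)$, which stabilises at $1$ once some $h_K$ saturates, and the same saturation set $T_K$ kills all later $\xi_k$, giving local finiteness for free. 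All the individual steps check out: $h_{n,m}$ is $m$-Lipschitz, $\supp(h_{n,m})\subset\{d(\cdot,X\setminus U_n)\ge 1/m\}\subset U_n$, the sets $T_{n,m}$ cover $X$ because each point lies at positive distance from the complement of some $U_n$, each $\xi_k$ is Lipschitz as a finite product of bounded Lipschitz functions, and $\coz(\xi_k)\cap T_K=\emptyset$ for $k>K$ forces $\supp(\xi_k)\cap T_K=\emptyset$ since $T_K$ is open. Your approach is more self-contained (no shrinking lemma, no decomposition of $\frac1t$), at the cost of producing a partition indexed by $\N\times\N$ whose members have rapidly growing Lipschitz constants; the paper's two lemmas, by contrast, are reused elsewhere (e.g.\ in Theorem \ref{theorem-Loc-Lipschitz-Ext:3}), which is presumably why the author factors the argument that way. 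Two trivial points worth a sentence in a write-up: handle the degenerate case $U_n=X$ (where $d(\cdot,X\setminus U_n)$ needs the convention $d(\cdot,\emptyset)=+\infty$, or simply note the cover is then a single set), and discard or tolerate the indices $k$ with $\xi_k\equiv 0$.
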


The proof of Theorem \ref{theorem-Loc-Lipschitz-Ext:2} is based on two
constructions. In the first one, a family of functions
$\xi_\alpha:X\to \R$, $\alpha\in \mathscr{A}$, is called \emph{locally
  finite} if the corresponding family
$\left\{\supp(\xi_\alpha):\alpha\in\mathscr{A}\right\}$ of subsets of
$X$ is locally finite; equivalently, if the family
$\left\{\coz(\xi_\alpha):\alpha\in\mathscr{A}\right\}$ is locally
finite in $X$. The following property of the function
$\frac1t:(0,+\infty)\to (0,+\infty)$ was stated in 
\cite[Lemma]{MR814046}. 
\begin{proposition}
  \label{proposition-Loc-Lipschitz-Ext:4}
  There exists a locally finite sequence  of Lipschitz functions
  $\ell_k:(0,+\infty)\to [0,1]$, $k\in \N$, such that
  \begin{equation}
    \label{eq:Loc-Lipschitz-Ext:2}
    \sum_{k=1}^\infty \ell_k(t)=\frac 1t\qquad \text{for every $t>0$.}
  \end{equation}
\end{proposition}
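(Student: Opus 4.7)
The plan is to give an explicit truncation-type decomposition of $1/t$. For each $k\in\N$, I would define
\[
\ell_k(t)=\min\bigl\{1,\max\{0,\tfrac1t-(k-1)\}\bigr\},\qquad t>0.
\]
By construction $\ell_k:(0,+\infty)\to[0,1]$, and each $\ell_k$ is a composition of contractions with the piecewise decreasing function $\tfrac1t-(k-1)$. Explicitly, a direct case analysis shows $\ell_k\equiv 1$ on $(0,1/k]$, $\ell_k(t)=\tfrac1t-(k-1)$ on $[1/k,1/(k-1)]$ for $k\geq 2$, and $\ell_k\equiv 0$ on $[1/(k-1),+\infty)$; for $k=1$ the second interval is $[1,+\infty)$ and the third is empty.

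Next I would verify \eqref{eq:Loc-Lipschitz-Ext:2}. Fix $t>0$ and let $n=\lceil 1/t\rceil$, so $n-1<\tfrac1t\leq n$. For $k\leq n-1$ we have $\tfrac1t-(k-1)\geq 1$, whence $\ell_k(t)=1$; for $k=n$ we have $\tfrac1t-(n-1)\in(0,1]$, so $\ell_n(t)=\tfrac1t-(n-1)$; and for $k\geq n+1$ we have $\tfrac1t-(k-1)\leq 0$, so $\ell_k(t)=0$. Adding these contributions gives $\sum_{k=1}^\infty\ell_k(t)=(n-1)+\bigl(\tfrac1t-(n-1)\bigr)=\tfrac1t$.

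The Lipschitz property then follows because $\ell_k$ is constant outside $[1/k,1/(k-1)]$ and, on that interval, its derivative is $-1/t^2$, whose absolute value is bounded by $k^2$; a standard gluing argument extends this to a global Lipschitz estimate on $(0,+\infty)$. For local finiteness, note that $\coz(\ell_k)\subset(0,1/(k-1))$ for $k\geq 2$; thus given any $t_0>0$, a small neighbourhood of $t_0$ meets $\coz(\ell_k)$ only for those $k$ with $1/(k-1)$ above the neighbourhood, which is a finite set of indices (together possibly with $\ell_1$, whose cozero set is all of $(0,+\infty)$, but that still contributes only one index).

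I do not foresee a real obstacle: the construction is explicit and the three properties are essentially independent verifications. The only point that requires a moment of care is handling the boundary case $1/t\in\N$ in the sum (where two adjacent indices take the boundary values $1$ and $0$ cleanly) and remembering that local finiteness of a family permits the one function $\ell_1$ with full support, since the definition only requires that each point have a neighbourhood meeting finitely many—not zero—of the supports.
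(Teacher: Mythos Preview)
Your proof is correct and is essentially the same as the paper's: your closed-form expression $\ell_k(t)=\min\{1,\max\{0,\tfrac1t-(k-1)\}\}$ defines precisely the functions the paper introduces recursively via $\ell_1(t)=\min\{1,\tfrac1t\}$ and $\ell_{k+1}(t)=\min\{k+1,\tfrac1t\}-\sum_{n=1}^k\ell_n(t)$ (equivalently, $\ell_k(t)=\min\{k,\tfrac1t\}-\min\{k-1,\tfrac1t\}$). The verifications of the sum, the Lipschitz property, and local finiteness proceed along the same lines in both.
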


\begin{proof}
  For each $k\in\N$, define a Lipschitz function
  $\ell_k:(0,+\infty)\to [0,1]$ by letting
  $\ell_1(t)=\min\left\{1,\frac1t\right\}$ and
  $\ell_{k+1}(t)=\min\left\{k+1,\frac1t\right\}-\sum_{n=1}^k\ell_n(t)$,
  for $t>0$.
    \begin{center}
      \begin{tikzpicture}[domain=-0.2:2.5,samples=500,scale=1.26]
     \filldraw[black!10!white] (0,.5) -- (2,.5) -- (1,.99) -- (0,.99);
  \draw[line width=.7pt, ->] (-0.2,0) -- (4,0);
  \draw[line width=.7pt, ->] (0,-.2) -- (0,2.7);
  \draw[fill=white, line width=.5pt,domain=0.4:3.5] plot (\x,1/\x)
  node[right] {\tiny $\frac1t$};
     \draw[line width=.5pt] (0,.5) -- (2,.5) (0,.5)
     node[left] {\tiny $1$} (-0.04,.25) node[scale=.9, right]{\tiny
       $\ell_1(t)=\min\big\{1,\frac1t\big\}$};
     \draw[line width=.5pt] (0,1) -- (1,1) (0,1)
     node[left] {\tiny $2$} (-0.04,.75) node[scale=.9, right]{\tiny
       $\ell_2(t)$};
     \draw[line width=.5pt] (0,1.5) -- (.67,1.5) (0,1.5)
     node[left] {\tiny $3$} (-0.04,1.25) node[scale=.9, right]{\tiny
       $\ell_3(t)$};
          \draw[line width=.4pt, dashed] (.55,0) -- (.55,.1)
          (.55,.35)-- (.55,1.8) (.55,0) node[below] {\tiny $t$};
       \filldraw [black]   (.55,0) circle
        [radius=.5pt] (.55,1.83) circle
        [radius=.5pt] node[right] {\tiny $\frac1t$};
      \end{tikzpicture}
    \end{center}
    It is evident that \eqref{eq:Loc-Lipschitz-Ext:2} holds because
    $t>\frac 1k$ implies that $\sum_{n=1}^k\ell_n(t)=\frac1t$. For the
    same reason, the family $\left\{\coz(\ell_k): k\in \N\right\}$ is
    locally finite as well.
\end{proof}

The second construction is a simple direct proof of \cite[Fact
A]{MR814046} without using Stone's result in \cite[Corollary
1]{stone:48}. In fact, this construction is a modified version of
similar results in \cite{MR1476756} and \cite[Proposition
4.2]{Gutev2020}.

\begin{proposition}
  \label{proposition-Loc-Lipschitz-Ext:2}
  For each open cover $\{V_n:n\in\N\}$ of a metric space $(X,d)$ there
  exists a locally finite open cover $\{U_n:n\in \N\}$ with
  $\overline{U_n}\subset V_n$ for every $n\in \N$.
\end{proposition}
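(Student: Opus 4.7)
The natural strategy is to build the shrinking from the $1$-Lipschitz functions $f_n(x) = d(x, X\setminus V_n)$, which satisfy $\{f_n > 0\} = V_n$. Since $V_n = \bigcup_k \{f_n > 2^{-k}\}$, the double-indexed family
$$V_{n,k} \;=\; \{x \in X : f_n(x) > 2^{-k}\}, \qquad (n,k)\in\N\times\N,$$
is an open refinement of $\{V_n\}$ covering $X$, with $\overline{V_{n,k}}\subset\{f_n\ge 2^{-k}\}\subset V_n$. These sub-level sets will be the building blocks, and the strategy is: (a) thin the double family to a single-indexed locally finite refinement, still covering $X$, then (b) regroup by the first index.

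For (a), enumerate $\N^2$ as $\{(n_m,k_m)\}_{m\ge 1}$ and inductively define
$$W_m \;=\; V_{n_m,k_m} \;\setminus\; \overline{\bigcup_{j<m} V_{n_j,\,k_j-1}},$$
or an analogous formula with a carefully chosen slack in the thresholds. Each $W_m$ is open (the subtracted set is a finite union of open sets, hence its closure is closed), $\overline{W_m}\subset V_{n_m}$ automatically, and the slack in the thresholds plays a double role: it must be wide enough to preserve coverage, namely, any point $x$ freshly placed into $V_{n_m,k_m}$ (that is, not already in some $V_{n_j,k_j}$ with $j<m$) should not be accidentally swallowed by the subtracted closure; and narrow enough that the $1$-Lipschitz property of $f_{n_m}$ traps a small metric ball about any $x\in W_m$ inside the subtracted region of all subsequent stages $m'$ with $k_{m'}$ large. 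A correct choice (with slack $2^{-k_j-1}$ vs.\ $2^{-k_j}$, say) can be checked by a direct metric computation: coverage goes through because the slack in the subtracted threshold is strictly coarser than the threshold defining the new $V_{n_m,k_m}$, while local finiteness is witnessed by the ball $\mathbf{O}(x,2^{-k_m-2})$, which lies in $\{f_{n_m}>2^{-k_m-1}\}$ and hence is removed from every subsequent $W_{m'}$ with $m'$ large enough. For (b), set
$$U_n \;=\; \bigcup_{m \,:\, n_m = n} W_m.$$
Local finiteness of $\{U_n\}$ is inherited from that of $\{W_m\}$ (any point has a neighbourhood meeting only finitely many $W_m$, hence only finitely many $U_n$); since locally finite unions commute with closure, $\overline{U_n}=\bigcup_{m:n_m=n}\overline{W_m}\subset V_n$; and $\bigcup_n U_n = \bigcup_m W_m = X$.

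The main obstacle is the double duty of the threshold slack in step (a): choosing it so that coverage is maintained at every inductive step while local finiteness follows uniformly from the geometric decay of $2^{-k}$ and the $1$-Lipschitz character of the $f_n$. The rest is routine metric bookkeeping. I expect the total proof to be short, with almost all the content compressed into getting this balance right.
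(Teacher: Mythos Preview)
Your approach is correct, and once the slack is fixed as you suggest (subtracting the closures of the $V_{n_j,k_j-1}$), both coverage and local finiteness go through exactly as you outline; in fact the formula you wrote already works without modification, so the hedging ``or an analogous formula'' is unnecessary. However, the paper takes a rather different and shorter route. Instead of a double-indexed refinement followed by inductive thinning and regrouping, it uses Mather's construction: with $\eta_n:X\to[0,2^{-n}]$ a $1$-Lipschitz function satisfying $\coz(\eta_n)=V_n$ and $\eta=\sum_n 2^{-n}\eta_n$, one sets $\gamma_n=\max\{\eta_n-\tfrac12\eta,0\}$ and $U_n=\coz(\gamma_n)$. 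Local finiteness is then immediate from the observation that $\eta(p)>2^{-k}$ forces $\gamma_n\equiv 0$ on a neighbourhood of $p$ for all $n>k$, and coverage follows because $\sup_n\eta_n(p)$ is attained at some $m$ and dominates $\tfrac12\eta(p)$. Your method is closer in spirit to the classical shrinking-and-thinning arguments for regular Lindel\"of spaces and makes explicit use of the $1$-Lipschitz property of the distance functions; the paper's method is more algebraic, avoids any enumeration of $\N^2$, and delivers the index-matching $\overline{U_n}\subset V_n$ in one step without a regrouping phase.
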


\begin{proof}
  For each $n\in \N$, take a $1$-Lipschitz function
  $\eta_n:X\to [0,2^{-n}]$ such that $V_n=\coz(\eta_n)$. Then the
  function $\eta=\sum_{n=1}^\infty\frac{\eta_n}{2^n}:X\to [0,1]$ is
  both positive-valued and ($1$-)Lipschitz. We may now use a
  construction by M. Mather, see \cite[Lemma]{MR0281155} and
  \cite[Lemma 5.1.8]{engelking:89}, in fact its further refinement for
  countable covers in \cite{MR1476756}. Namely, for each $n\in\N$,
  define a continuous function $\gamma_n:X\to [0,1]$ by
  ${\gamma_n(x)=\max\left\{\eta_n(x)-\frac12 \eta(x),0\right\}}$,
  $x\in X$, and set $U_n=\coz(\gamma_n)$. Evidently,
  $\overline{U_n}=\supp(\gamma_n)\subset \coz(\eta_n)=V_n$ because
  $\supp(\gamma_n)\subset\left\{x\in X: \eta_n(x)\geq \frac 12
    \eta(x)\right\}$. Moreover, $p\in X$ implies that $\eta(p)>2^{-k}$
  for some $k\in \N$, hence $p$ is contained in an open set
  $U\subset X$ with $\eta(x)>2^{-k}$ for every $x\in U$. Accordingly,
  $\gamma_n(x)=0$ for every $x\in U$ and $n> k$, so the family
  $U_n=\coz(\gamma_n)$, $n\in \N$, is locally finite. It is also a
  cover of $X$. Indeed, if $p\in X$, then
  $\sup_{n\in\N}\eta_n(p)\geq 2^{-k}$ for some $k\in \N$. Since
  $\eta_n(p)<2^{-k}$ for every $n> k$, it follows that
  $\sup_{n\in\N}\eta_n(p)=\eta_m(p)$ for some $m\leq k$. Thus,
  $p\in U_m=\coz(\gamma_m)$ because
  $\eta_m(p)=\sum_{n=1}^\infty \frac{\eta_m(p)}{2^n}\geq
  \eta(p)>\frac12 \eta(p)$.
\end{proof}

\begin{proof}[Proof of Theorem \ref{theorem-Loc-Lipschitz-Ext:2}]
  Having established Propositions
  \ref{proposition-Loc-Lipschitz-Ext:4} and
  \ref{proposition-Loc-Lipschitz-Ext:2}, we can proceed as in the
  proof of \cite[Lemma]{MR814046}. Namely, let
  $\ell_k:(0,+\infty)\to [0,1]$, $k\in \N$, be the sequence of
  Lipschitz functions constructed in Proposition
  \ref{proposition-Loc-Lipschitz-Ext:4}. Also, let $\{V_n:n\in \N\}$
  be an open cover of a metric space $(X,d)$, and $\{U_n:n\in \N\}$ be
  the corresponding locally finite open refinement constructed in
  Proposition \ref{proposition-Loc-Lipschitz-Ext:2}.  Next, for each
  $n\in \N$, take a $\frac1{2^n}$-Lipschitz function
  $\eta_n:X\to [0,1]$ with $U_n=\coz(\eta_n)$.  Then the sum function
  $\eta=\sum_{n=1}^\infty \eta_n:X\to [0,+\infty)$ is both
  positive-valued and ($1$-)Lipschitz. Hence, each
  $\ell_k\circ \eta:X\to [0,1]$, $k\in \N$, is a Lipschitz function
  such that by \eqref{eq:Loc-Lipschitz-Ext:2},
  $\frac 1\eta= \sum_{k=1}^\infty \ell_k\circ \eta$. Since the product
  of bounded Lipschitz functions is also Lipschitz, we can finally
  define the required partition of unity
  $\left\{\xi_{nk}:n,k\in \N\right\}$ subordinated to
  $\left\{V_n:n\in \N\right\}$ by
  $\xi_{nk}=\eta_n\cdot \left(\ell_k\circ \eta\right)$, for
  $n,k\in \N$.
\end{proof}

A partition of unity $\xi_\alpha:X\to [0,1]$, $\alpha\in \mathscr{A}$,
on a space $X$ is \emph{index-subordinated} to a cover
$\{V_\alpha:\alpha\in \mathscr{A}\}$ of $X$ if
$\supp(\xi_\alpha)\subset V_\alpha$ for every $\alpha\in
\mathscr{A}$. Regarding the difference between Lipschitz and locally
Lipschitz partitions of unity, Theorem
\ref{theorem-Loc-Lipschitz-Ext:2} implies the following result, it was
obtained in \cite[Proposition 4.2]{Gutev2020}.

\begin{corollary}
  \label{corollary-Loc-Lipschitz-Ext:1}
  Each open cover $\{V_n:n\in\N\}$ of a metric space $(X,d)$ admits a
  locally finite index-subordinated partition of unity consisting of
  locally Lipschitz functions.
\end{corollary}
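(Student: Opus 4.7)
The plan is to derive this from Theorem \ref{theorem-Loc-Lipschitz-Ext:2} by pooling a given Lipschitz partition of unity according to which member of $\{V_n\}$ contains each support. First, I would apply Theorem \ref{theorem-Loc-Lipschitz-Ext:2} to the cover $\{V_n:n\in \N\}$ to obtain a locally finite countable partition of unity $\{\xi_k:k\in \N\}$ consisting of Lipschitz functions and subordinated to $\{V_n\}$. For each $k\in \N$, choose $n(k)\in \N$ with $\supp(\xi_k)\subset V_{n(k)}$, and set
\[
  \psi_n = \sum_{n(k)=n}\xi_k,\quad n\in \N,
\]
where an empty sum is interpreted as the zero function.

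The verification then has three parts. First, each $\psi_n$ is locally Lipschitz: given $p\in X$, the local finiteness of $\{\coz(\xi_k):k\in \N\}$ yields a neighborhood of $p$ on which only finitely many $\xi_k$ fail to vanish, so $\psi_n$ agrees there with a finite sum of Lipschitz functions. Second, $\{\psi_n\}$ is locally finite with $\supp(\psi_n)\subset V_n$: since $\{\supp(\xi_k):k\in \N\}$ is locally finite, the union $F_n=\bigcup_{n(k)=n}\supp(\xi_k)$ is closed in $X$; since $\coz(\psi_n)\subset F_n\subset V_n$, taking closures gives $\supp(\psi_n)\subset V_n$, and the local finiteness of $\{\supp(\psi_n):n\in \N\}$ is then inherited from that of $\{\supp(\xi_k)\}$. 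Third, $\{\psi_n\}$ sums to one at each $p\in X$ by unconditional rearrangement of a sum that has only finitely many nonzero terms:
\[
  \sum_{n\in \N}\psi_n(p)=\sum_{n\in \N}\sum_{n(k)=n}\xi_k(p)=\sum_{k\in \N}\xi_k(p)=1.
\]

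The main point of care is the support inclusion $\supp(\psi_n)\subset V_n$ when the fiber $\{k:n(k)=n\}$ is infinite; a priori the support of an infinite sum lies only in the closure of the union of supports of the summands, which could escape $V_n$. It is precisely the local finiteness of $\{\supp(\xi_k)\}$ that forces this union to already be closed, thereby preventing any enlargement under taking closures. Note that no upgrade from Lipschitz to locally Lipschitz is needed for the individual $\xi_k$; the passage from Lipschitz to locally Lipschitz occurs only at the level of the pooled functions $\psi_n$, and is unavoidable because an infinite sum of Lipschitz functions with uniformly bounded Lipschitz constants need not be Lipschitz.
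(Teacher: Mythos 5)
Your proposal is correct and follows essentially the same route as the paper: both apply Theorem \ref{theorem-Loc-Lipschitz-Ext:2}, group the resulting Lipschitz partition of unity by an index choice $n(\cdot)$ with $\supp(\xi_k)\subset V_{n(k)}$, and sum over each fiber, with local finiteness supplying both the support inclusion and the locally Lipschitz property of the pooled functions. Your extra care about $\supp(\psi_n)\subset V_n$ via closedness of a locally finite union is a correct elaboration of a point the paper leaves implicit.
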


\begin{proof}
  By Theorem \ref{theorem-Loc-Lipschitz-Ext:2}, the cover
  $\{V_n:n\in\N\}$ admits a locally finite partition of unity
  $\{\eta_\alpha:\alpha\in \mathscr{A}\}$ consisting of Lipschitz
  functions. Then by definition, for each $\alpha\in \mathscr{A}$
  there exists $n(\alpha)\in \N$ with
  $\supp(\eta_\alpha)\subset V_{n(\alpha)}$. For convenience, set
  $\mathscr{A}_n=\{\alpha\in \mathscr{A}: n(\alpha)=n\}$, $n\in
  \N$. Next, for each $n\in \N$ define a function $\xi_n:X\to [0,1]$
  by $\xi_n=\sum_{\alpha\in \mathscr{A}_n} \eta_\alpha$ if
  $\mathscr{A}_n\neq \emptyset$ and $\xi_n\equiv 0$ otherwise. Thus,
  we get a locally finite partition of unity $\{\xi_n:n\in \N\}$ which
  is index-subordinated to the cover $\{V_n:n\in\N\}$. Moreover, each
  $\xi_n$, $n\in\N$, is locally Lipschitz being a locally finite sum
  of Lipschitz functions.
\end{proof}

A map $f:X\to Y$ between metric spaces $(X,d)$ and $(Y,\rho)$ is
called \emph{nonexpansive} if it is $1$-Lipschitz, namely when
$\rho(f(p),f(q))\leq d(p,q)$ for every $p,q\in X$. In \cite{MR814046},
Frol\'{\i}k actually showed that each open cover of a metric space
admits a locally finite partition of unity consisting of nonexpansive
functions. Since each Lipschitz function $\xi:X\to \R$ is a finite sum
of nonexpansive functions, Theorem~\ref{theorem-Loc-Lipschitz-Ext:2}
implies also the following consequence.

\begin{corollary}
  \label{corollary-LE-Revisited-v14:1}
    Each countable open cover of a metric space admits a locally
  finite countable partition of unity consisting of nonexpansive
  functions.  
\end{corollary}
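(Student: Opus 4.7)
My plan is to follow the explicit hint: invoke Theorem \ref{theorem-Loc-Lipschitz-Ext:2} to get a locally finite countable Lipschitz partition of unity subordinated to the given cover, and then split each member into finitely many nonexpansive pieces by simple scalar division. All the real content is in the observation that a nonnegative $K$-Lipschitz function on $X$ with values in $[0,1]$ can be written as the sum of $m$ copies of $\xi/m$ whenever $m \geq K$, each copy being nonexpansive.

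Concretely, given a countable open cover $\{V_n:n\in\N\}$ of a metric space $(X,d)$, Theorem \ref{theorem-Loc-Lipschitz-Ext:2} supplies a locally finite countable partition of unity $\{\xi_n:n\in \N\}$ subordinated to $\{V_n:n\in \N\}$, with each $\xi_n:X\to [0,1]$ a $K_n$-Lipschitz function for some $K_n\geq 0$. For each $n\in \N$ set $m_n=\max\{1,\lceil K_n\rceil\}\in \N$ and, for $j=1,\dots,m_n$, let $\eta_{n,j}=\tfrac{1}{m_n}\xi_n$. Then $\eta_{n,j}:X\to[0,1/m_n]\subset [0,1]$ is $\tfrac{K_n}{m_n}$-Lipschitz with $K_n/m_n\leq 1$, so $\eta_{n,j}$ is nonexpansive, and $\sum_{j=1}^{m_n}\eta_{n,j}=\xi_n$.

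Reindexing $\{\eta_{n,j}: n\in\N,\ 1\leq j\leq m_n\}$ as a sequence gives a countable family of nonexpansive functions whose pointwise sum equals $\sum_n \xi_n\equiv 1$. Since $\coz(\eta_{n,j})=\coz(\xi_n)$, the supports $\supp(\eta_{n,j})=\supp(\xi_n)$ still form a refinement of $\{V_n:n\in\N\}$, so the new family is subordinated to the original cover; for the same reason the cozero sets form a locally finite family, because replacing each member of a locally finite family by finitely many copies keeps it locally finite.

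There is no serious obstacle here: the only point to check is that the splitting preserves membership in $[0,1]$, the partition-of-unity property, subordination, and local finiteness, all of which are immediate from $\eta_{n,j}=\xi_n/m_n$. In particular no new analytic estimate beyond the trivial one $\mathrm{Lip}(\xi_n/m_n)\leq 1$ is required, so the corollary reduces mechanically to Theorem \ref{theorem-Loc-Lipschitz-Ext:2}.
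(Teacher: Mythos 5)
Your proposal is correct and is essentially the paper's own argument: the paper derives the corollary from Theorem \ref{theorem-Loc-Lipschitz-Ext:2} by noting that each Lipschitz function is a finite sum of nonexpansive functions, which is exactly your splitting $\xi_n=\sum_{j=1}^{m_n}\xi_n/m_n$ with $m_n\geq \lip(\xi_n)$. All the verifications you list (values in $[0,1]$, summing to $1$, unchanged cozero sets, hence preserved subordination and local finiteness) are the right ones and are immediate.
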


\begin{remark}
  \label{remark-LE-Revisited-v4:1}
  In several sources, including \cite{Gutev2020} and Frol\'{\i}k's
  paper \cite{MR814046}, a partition of unity $\xi_\alpha:X\to [0,1]$,
  $\alpha\in \mathscr{A}$, on space $X$ is \emph{subordinated} to a
  cover $\mathscr{V}$ of $X$ if the cover
  $\left\{\coz(\xi_\alpha):\alpha\in\mathscr{A}\right\}$ is a
  refinement of $\mathscr{V}$. According to
  Theorem~\ref{theorem-Loc-Lipschitz-Ext:2}, both interpretations are
  equivalent in the realm of countable open covers of metric
  spaces. In fact, they are also equivalent in general. Namely, if
  $\{\eta_\alpha:\alpha\in \mathscr{A}\}$ is a partition of unity on
  $X$, then $X$ also has a (locally finite) partition of unity
  $\{\xi_\alpha:\alpha\in \mathscr{A}\}$ with
  $\supp(\xi_\alpha)\subset \coz(\eta_\alpha)$ for all
  $\alpha\in \mathscr{A}$, see \cite[Proposition
  2.7.4]{MR3099433}. This property is essentially the construction of
  M. Mather used in the proof of
  Proposition~\ref{proposition-Loc-Lipschitz-Ext:2}.
\end{remark}

\section{Lipschitz and Locally Lipschitz Functions}
\label{sec:lipsch-locally-lipsc-1}

By Proposition \ref{proposition-Loc-Lipschitz-Ext:4}, the locally
Lipschitz function $\frac1t:(0,+\infty)\to (0,+\infty)$ is a locally
finite sum of bounded Lipschitz functions. Here, we will show that
this is valid for any locally Lipschitz function.

\begin{theorem}
  \label{theorem-Loc-Lipschitz-Ext:3}
  If $(X,d)$ is a metric space, then each locally Lipschitz function
  $f:X\to \R$ is a locally finite sum of some sequence of bounded
  Lipschitz functions.
\end{theorem}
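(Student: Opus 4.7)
The plan is to produce a countable open cover $\{U_n:n\in\N\}$ of $X$ on each member of which $f$ is both bounded and globally Lipschitz, extend each restriction via Theorem~\ref{theorem-Lipschitz-ext:20}, and combine the extensions using a Lipschitz partition of unity furnished by Theorem~\ref{theorem-Loc-Lipschitz-Ext:2}.

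For each $n\in\N$, let $E_n=\{x\in X:f\uhr\mathbf{O}(x,1/n)\text{ is }n\text{-Lipschitz}\}$ and set $U_n=\operatorname{int}(E_n)\cap\{x\in X:|f(x)|<n\}$. Each $U_n$ is open, and $\bigcup_n U_n=X$: given $x\in X$, local Lipschitzness furnishes $\delta>0$ and $K\geq 0$ with $f\uhr\mathbf{O}(x,\delta)$ being $K$-Lipschitz, and for $n\geq\max\{K,\,2/\delta,\,|f(x)|+1\}$, every $y$ sufficiently close to $x$ satisfies $\mathbf{O}(y,1/n)\subset\mathbf{O}(x,\delta)$ (so $y\in E_n$) and $|f(y)|<n$ (by continuity). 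I then show that $f$ is $2n^2$-Lipschitz on $U_n$, and hence on $\overline{U_n}$: for $y,z\in U_n\subset E_n$ with $d(y,z)<1/n$, the $n$-Lipschitz bound on $\mathbf{O}(y,1/n)$ gives $|f(y)-f(z)|\leq n\,d(y,z)$; for $d(y,z)\geq 1/n$, the bound $|f|<n$ on $U_n$ yields $|f(y)-f(z)|<2n\leq 2n^2\,d(y,z)$; continuity of $f$ propagates both estimates to $\overline{U_n}$.

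By Theorem~\ref{theorem-Lipschitz-ext:20}, applied to the closed set $\overline{U_n}$ and the interval $[-n,n]$, the restriction $f\uhr\overline{U_n}$ extends to a $2n^2$-Lipschitz function $\tilde f_n:X\to[-n,n]$. Next, Theorem~\ref{theorem-Loc-Lipschitz-Ext:2} yields a locally finite countable partition of unity $\{\xi_k:k\in\N\}$ of Lipschitz functions with $\supp(\xi_k)\subset U_{n(k)}$ for suitable $n(k)\in\N$. Setting $f_k=\xi_k\cdot\tilde f_{n(k)}$ produces bounded Lipschitz functions (products of two bounded Lipschitz functions are bounded Lipschitz); since $\tilde f_{n(k)}$ coincides with $f$ on $\supp(\xi_k)$, $f_k=\xi_k f$ on $X$, whence $\sum_k f_k=f\sum_k\xi_k=f$ with local finiteness inherited from $\{\xi_k\}$.

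The main obstacle is producing a countable open cover on which $f$ is globally Lipschitz rather than merely locally Lipschitz: the raw set $E_n$ need not be open, since enlarging $y$ slightly enlarges $\mathbf{O}(y,1/n)$ beyond the region where $f$ was known to be $n$-Lipschitz, and taking the interior is the cleanest remedy. The matched choice of radius $1/n$ and Lipschitz constant $n$ is what allows both close pairs (controlled via the local Lipschitz ball) and far pairs (controlled via the bound on $|f|$) to yield a single global constant $2n^2$ per cover member, so that a single integer parameter $n$ suffices to index the cover.
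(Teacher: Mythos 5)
Your proof is correct and follows essentially the same route as the paper: produce a countable open cover on each member of which $f$ is bounded and Lipschitz, take the Lipschitz partition of unity from Theorem~\ref{theorem-Loc-Lipschitz-Ext:2}, extend the restrictions to bounded Lipschitz functions on all of $X$, and multiply by the partition functions. The only (harmless) variation is that you build the cover by hand via the sets $E_n$ and their interiors, where the paper instead applies Proposition~\ref{proposition-Loc-Lipschitz-v5:2} to each $f^{-1}\left((-n,n)\right)$, and you extend from $\overline{U_n}$ rather than from $\supp(\xi_k)$.
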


The proof of this theorem is based on the following simple observation
which was obtained in \cite[Proposition 4.3]{Gutev2020}.

\begin{proposition}
  \label{proposition-Loc-Lipschitz-v5:2}
  If $(X,d)$ and $(Y,\rho)$ are metric spaces and $f:X\to Y$ is a
  bounded locally Lipschitz map, then $X$ has an open increasing cover
  $\{U_n:n\in\N\}$ such that $f\uhr U_n$ is $n$-Lipschitz for every
  $n\in \N$.
\end{proposition}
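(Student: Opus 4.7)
The plan is to construct the sets $U_n$ by combining the local Lipschitz property with the boundedness of $f$: the local property controls pairs of points that are close together, while the global bound controls pairs that are far apart. Fix $M>0$ with $\rho(f(x),f(y))\leq M$ for all $x,y\in X$ (if $f$ is constant, take $U_n=X$ for every $n$). For each $n\in \N$, I would define
$$
U_n=\left\{p\in X:\ \exists\,\delta>M/n\ \text{such that}\ f\uhr\mathbf{O}(p,\delta)\ \text{is $n$-Lipschitz}\right\}.
$$

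First I would check that each $U_n$ is open. If $p\in U_n$ has witness $\delta>M/n$, then for any $p'$ with $d(p,p')<\tfrac12(\delta-M/n)$ the ball $\mathbf{O}\!\left(p',\tfrac12(\delta+M/n)\right)$ is contained in $\mathbf{O}(p,\delta)$ and has radius strictly greater than $M/n$, so $p'\in U_n$. The inclusion $U_n\subset U_{n+1}$ is immediate, since every $n$-Lipschitz restriction is also $(n{+}1)$-Lipschitz and $M/(n{+}1)<M/n$, so any witness for $U_n$ remains a witness for $U_{n+1}$. That $\{U_n:n\in\N\}$ covers $X$ follows directly from the local Lipschitz property: for $p\in X$, pick $\delta_p>0$ and $K_p\geq 0$ with $f\uhr\mathbf{O}(p,\delta_p)$ being $K_p$-Lipschitz, and choose $n\in\N$ with $n\geq K_p$ and $M/n<\delta_p$; then $p\in U_n$.

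The crucial step, and the one that dictates the whole construction, is showing that $f\uhr U_n$ is globally $n$-Lipschitz. For $p,q\in U_n$ with $d(p,q)\geq M/n$ the boundedness of $f$ gives $\rho(f(p),f(q))\leq M\leq n\,d(p,q)$; while for $d(p,q)<M/n$ the point $q$ lies in the witness ball $\mathbf{O}(p,\delta)$ on which $f$ is $n$-Lipschitz, so $\rho(f(p),f(q))\leq n\,d(p,q)$ as well. The only real subtlety is calibrating the two regimes through a single parameter: the threshold $M/n$ is the one value for which the ``distant'' estimate $\rho(f(p),f(q))\leq M$ converts exactly into an $n$-Lipschitz inequality, and one must insist on the \emph{strict} inequality $\delta>M/n$ in the definition of $U_n$ to leave the margin needed to guarantee openness.
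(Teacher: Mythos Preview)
Your proof is correct and follows essentially the same strategy as the paper's: combine the local Lipschitz data with the global diameter bound to handle ``near'' and ``far'' pairs separately, pivoting on a threshold comparable to $M/n$. The only cosmetic difference is that the paper fixes one $\delta_p$ per point (with $f$ $K_p$-Lipschitz on $\mathbf{O}(p,2\delta_p)$), sets $L_p=\max\{K_p,\,K/\delta_p\}$, and defines $U_n=\bigcup_{L_p\leq n}\mathbf{O}(p,\delta_p)$, so openness comes for free; you instead describe $U_n$ by an existential condition and supply the short openness argument by hand.
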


\begin{proof}
  Let $K\geq 0$ be such that $\rho(f(x),f(y))\leq K$ for every
  $x,y\in X$.  Also, for ${p\in X}$, let $\delta_p>0$ and $K_p\geq 0$
  be such that $\rho(f(x),f(y))\leq K_p d(x,y)$ for every
  $x,y\in \mathbf{O}(p,2\delta_p)$. Then just as in the proof of
  Proposition \ref{proposition-Loc-Lipschitz-v3:2},
  $L_p=\max\left\{K_p,\frac K{\delta_p}\right\}$ has the property that
  \begin{equation}
    \label{eq:Loc-Lipschitz-v6:1}
    \rho(f(x),f(y))\leq L_p d(x,y)\quad \text{for every $x\in X$
      and $y\in \mathbf{O}(p,\delta_p)$.}
  \end{equation}
  Finally, set
  $U_n=\bigcup\{\mathbf{O}(p,\delta_p): L_p\leq n\}$,
  $n\in\N$. Evidently, $\{U_n:n\in\N\}$ is an increasing open cover of
  $X$. Moreover, if $x,y\in U_n$, then $x\in \mathbf{O}(p,\delta_p)$
  and $y\in \mathbf{O}(q,\delta_q)$ for some $p,q\in X$ with
  $\max\{L_p,L_q\}\leq n$. According to
  \eqref{eq:Loc-Lipschitz-v6:1}, this implies that
  $\rho(f(x),f(y))\leq n d(x,y)$.
\end{proof}

\begin{proof}[Proof of Theorem \ref{theorem-Loc-Lipschitz-Ext:3}]
  Take a locally Lipschitz function $f:X\to \R$. Then $f$ is both
  bounded and locally Lipschitz on each set
  $f^{-1}\left((-n,n)\right)$, $n\in \N$. Hence, by Proposition
  \ref{proposition-Loc-Lipschitz-v5:2}, there exists a countable open
  cover $\mathscr{U}$ of $X$ such that $f\uhr U$ is bounded and
  Lipschitz for each $U\in \mathscr{U}$. We can now apply Theorem
  \ref{theorem-Loc-Lipschitz-Ext:2} to get a locally finite partition
  of unity $\{\xi_n:n\in \N\}$ which consists of Lipschitz functions
  and is subordinated to $\mathscr{U}$. Thus, each restriction
  $g_n=f\uhr \supp(\xi_n)$, $n\in \N$, is a bounded Lipschitz function
  and according to Proposition \ref{proposition-Lipschitz-ext:11}, it
  can be extended to a bounded Lipschitz function $\psi_n:X\to
  \R$. Finally, for each $n\in \N$, set $\varphi_n=\psi_n\cdot\xi_n$
  which is a (bounded) Lipschitz function because so are $\psi_n$ and
  $\xi_n$. To see that these functions $\varphi_n$, $n\in\N$, are as
  required, take a point $p\in X$ and consider the finite set
  $\sigma_p=\{n\in\N: \xi_n(p)>0\}$. Then
  $\varphi_n(p)=\psi_n(p)\cdot\xi_n(p)= 0$ for every
  $n\in \N\setminus \sigma_p$ and, therefore,
  \[
        f(p)= \sum_{n\in\sigma_p}
    g_n(p)\cdot \xi_n(p)
    = \sum_{n\in\sigma_p}
    \psi_n(p)\cdot \xi_n(p)=\sum_{n\in\sigma_p}\varphi_n(p)=
      \sum_{n=1}^\infty \varphi_n(p).\qedhere
    \]
\end{proof}

Precisely as in the special case of Lipschitz partitions of unity in
Theorem \ref{theorem-Loc-Lipschitz-Ext:2} (see Corollary
\ref{corollary-LE-Revisited-v14:1}), we have the following consequence
of Theorem \ref{theorem-Loc-Lipschitz-Ext:3}.

\begin{corollary}
  \label{corollary-Loc-Lipschitz-Ext:2}
  If $(X,d)$ is a metric space, then each locally Lipschitz function
  $f:X\to \R$ is a locally finite sum of some sequence of bounded
  nonexpansive functions.
\end{corollary}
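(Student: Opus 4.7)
The plan is to reduce this statement to Theorem \ref{theorem-Loc-Lipschitz-Ext:3} by using the elementary fact that every $K$-Lipschitz function can be expressed as a finite sum of nonexpansive functions. Concretely, I would first invoke Theorem \ref{theorem-Loc-Lipschitz-Ext:3} to write
\[
f=\sum_{n=1}^\infty \varphi_n,
\]
where $\{\varphi_n:n\in\N\}$ is a locally finite sequence of bounded Lipschitz functions on $X$. For each $n\in\N$, pick an integer $m_n\in\N$ with $\lip(\varphi_n)\leq m_n$ (any $m_n\geq 1$ will do when $\varphi_n\equiv 0$). Then the scaled function $\varphi_n/m_n$ is bounded and $1$-Lipschitz, i.e.\ nonexpansive.

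Next I would split each $\varphi_n$ into $m_n$ equal summands, setting $\omega_{n,i}=\varphi_n/m_n$ for $1\leq i\leq m_n$, so that $\varphi_n=\sum_{i=1}^{m_n}\omega_{n,i}$. Flattening the double index $(n,i)$ to a single $\N$-indexed sequence produces a countable family $\{\omega_k:k\in\N\}$ of bounded nonexpansive functions with $f=\sum_{k=1}^\infty \omega_k$.

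Finally, I would verify local finiteness. Since $\supp(\omega_{n,i})\subset \supp(\varphi_n)$ for every $n$ and every $i\in\{1,\dots,m_n\}$, and the original family $\{\supp(\varphi_n):n\in\N\}$ is locally finite in $X$, each point $p\in X$ admits a neighbourhood meeting only finitely many $\supp(\varphi_n)$ and therefore only finitely many $\supp(\omega_{n,i})$. This yields the desired locally finite representation. There is essentially no obstacle here beyond the bookkeeping of the double index; the nontrivial content is already encapsulated in Theorem \ref{theorem-Loc-Lipschitz-Ext:3}, and the final step is a direct analogue of the passage from Theorem \ref{theorem-Loc-Lipschitz-Ext:2} to Corollary \ref{corollary-LE-Revisited-v14:1}.
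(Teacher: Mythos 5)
Your proposal is correct and follows exactly the route the paper intends: the paper derives this corollary from Theorem \ref{theorem-Loc-Lipschitz-Ext:3} by the same observation (stated just before Corollary \ref{corollary-LE-Revisited-v14:1}) that every bounded Lipschitz function is a finite sum of bounded nonexpansive functions, and splitting each term does not enlarge supports, so local finiteness is preserved. No gaps.
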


\begin{remark}
  \label{remark-LE-Revisited-v11:1}
  It is a simple exercise that the product of two locally pointwise
  Lipschitz functions is also locally pointwise Lipschitz. Similarly,
  a locally finite sum of locally pointwise Lipschitz functions is
  locally pointwise Lipschitz as well. Accordingly, Theorem
  \ref{theorem-Loc-Lipschitz-Ext:3} remains valid for locally
  pointwise Lipschitz functions with essentially the same proof but
  now using Theorem \ref{theorem-Loc-Lipschitz-v17:1} instead of
  Proposition~\ref{proposition-Lipschitz-ext:11}. Namely, a function
  $f:X\to \R$ is locally pointwise Lipschitz of and only if it is a
  locally finite sum of (bounded) pointwise Lipschitz functions.
\end{remark}

We conclude this section with another result related to the difference
between Lipschitz and locally Lipschitz functions. Namely, using
Proposition \ref{proposition-Loc-Lipschitz-v5:2} we will also prove
the following theorem.

\begin{theorem}
  \label{theorem-LE-Revisited-v17:1}
  For metric spaces $(X,d)$ and $(Y,\rho)$, a map $f:X\to Y$ is
  locally Lipschitz if and only if there exists a continuous function
  $L:X\times X\to [0,+\infty)$ such that
  \begin{equation}
    \label{eq:LE-Revisited-v17:1}
    \rho(f(x),f(y))\leq L(x,y) d(x,y)\quad \text{for every $x,y\in X$.}
  \end{equation}
\end{theorem}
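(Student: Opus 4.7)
For the easy direction, fix $p\in X$ and use continuity of $L$ at $(p,p)$ to find $\delta_p>0$ with $L\leq L(p,p)+1$ on $\mathbf{O}(p,\delta_p)\times\mathbf{O}(p,\delta_p)$; then \eqref{eq:LE-Revisited-v17:1} forces $f\uhr\mathbf{O}(p,\delta_p)$ to be Lipschitz, so $f$ is locally Lipschitz.

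For the converse, my plan is to work with the difference quotient
\[
L^*(x,y)=\frac{\rho(f(x),f(y))}{d(x,y)}\quad(x\neq y),\qquad L^*(x,x)=0,
\]
so that \eqref{eq:LE-Revisited-v17:1} becomes $L\geq L^*$; the task is then to produce a continuous majorant $L:X\times X\to[0,+\infty)$ of $L^*$. I would build $L$ by a weighted-sum construction from a countable Lipschitz partition of unity on the metric space $X\times X$.

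The heart of the proof is to reduce to a countable open cover of $X\times X$ on which $L^*$ is uniformly bounded. Fix $x_0\in X$ and set $g(x)=\rho(f(x),f(x_0))$; the open sets $V_n=g^{-1}([0,n))$ make $f\uhr V_n$ bounded and locally Lipschitz, so Proposition~\ref{proposition-Loc-Lipschitz-v5:2} supplies an \emph{increasing} countable open cover $\{V_{n,k}\}_{k\in\N}$ of $V_n$ with $f\uhr V_{n,k}$ being $k$-Lipschitz. The product family $\{V_{n,k}\times V_{n,k}:n,k\in\N\}$ then covers $X\times X$: any $(x,y)$ lies in $V_n\times V_n$ for $n>\max\{g(x),g(y)\}$, and the nestedness in $k$ forces both $x$ and $y$ into a common $V_{n,k}$. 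On each patch $f$ is $k$-Lipschitz on the single set $V_{n,k}$, so $L^*\leq k$ there (the diagonal contributing only $L^*=0$).

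Now Theorem~\ref{theorem-Loc-Lipschitz-Ext:2} applied to $X\times X$ and this countable cover yields a locally finite Lipschitz partition of unity $\{\tau_i\}_{i\in\N}$ with $\supp(\tau_i)\subset V_{n_i,k_i}\times V_{n_i,k_i}$, and I would set
\[
L(x,y)=\sum_{i=1}^\infty k_i\,\tau_i(x,y).
\]
Local finiteness makes $L$ a locally finite sum of Lipschitz functions, hence locally Lipschitz and in particular continuous; the inequality $k_i\geq L^*(x,y)$ on $\supp(\tau_i)$, together with $\sum_i\tau_i\equiv 1$, gives $L(x,y)\geq L^*(x,y)\cdot\sum_i\tau_i(x,y)=L^*(x,y)$, which is the required inequality. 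The main obstacle is the reduction in the previous paragraph: producing a \emph{countable} cover of $X\times X$ that uniformly bounds $L^*$, so that the countable partition-of-unity result from Section~\ref{sec:lipsch-locally-lipsc} (which bypasses Stone's theorem) suffices. The nestedness of Proposition~\ref{proposition-Loc-Lipschitz-v5:2}'s cover is exactly what makes the product family cover $X\times X$, rather than needing a fully uncountable Frol\'{\i}k-style construction directly on $X\times X$.
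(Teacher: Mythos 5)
Your argument is correct, but it takes a genuinely different route from the paper's. The paper first reduces to a bounded map by replacing $\rho$ with the equivalent bounded metric $\rho/(1+\rho)$, applies Proposition \ref{proposition-Loc-Lipschitz-v5:2} once on $X$ to get an increasing open cover $\{U_n\}$ with $f\uhr U_n$ being $n$-Lipschitz, observes that $\eta(x)=\min\{n\in\N : x\in U_n\}$ is upper semi-continuous, invokes Hausdorff's approximation theorem to produce a continuous majorant $\ell\geq \eta$, and takes $L(x,y)=\max\{\ell(x),\ell(y)\}$, finally correcting by the factor $1+\rho(f(x),f(y))$ to undo the change of metric. You instead work on the product $X\times X$: you handle unboundedness by exhausting $X$ with the sets $V_n=\{x : \rho(f(x),f(x_0))<n\}$ rather than by changing the metric; you exploit the fact that the cover supplied by Proposition \ref{proposition-Loc-Lipschitz-v5:2} is \emph{increasing}, so that the products $V_{n,k}\times V_{n,k}$ form a countable open cover of $X\times X$ on which the difference quotient is bounded; and you replace the semicontinuity/Hausdorff step by the countable Lipschitz partition of unity of Theorem \ref{theorem-Loc-Lipschitz-Ext:2}, summing $k_i\tau_i$ to get the majorant. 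Both proofs hinge on Proposition \ref{proposition-Loc-Lipschitz-v5:2}. The paper's version stays on $X$ and yields a majorant in the separated form $\max\{\ell(x),\ell(y)\}$ (times a continuous correction), at the cost of importing Hausdorff's theorem from the literature; yours is self-contained within the machinery of Section \ref{sec:lipsch-locally-lipsc} and, as a bonus, produces an $L$ that is locally Lipschitz on $X\times X$, not merely continuous.
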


To prepare for the proof of Theorem \ref{theorem-LE-Revisited-v17:1},
let us recall that a function $\eta:X\to \R$ is \emph{upper}
(\emph{lower}) \emph{semi-continuous} if the set
\[
\{x\in X:\eta(x)<t\}\quad \text{(respectively, $\{x\in X:\eta(x)>t\}$)}
\]
is open in $X$ for every $t\in \R$. In \cite{hausdorff:19}, Hausdorff
showed that Baire's approximation theorem for semi-continuous in
\cite{MR1504475} is valid for any metric domain $(X,d)$, namely that
each upper (lower) semi-continuous function $\eta:X\to \R$ is the
pointwise limit of some decreasing (increasing) sequence of continuous
functions. A partial case of this result was previously obtained by
Hahn in \cite{zbMATH02610879}, and refined in his book
\cite{Hahn1921}. The interested reader is also referred to
\cite[Theorem 5.1]{Gutev2020} for a simple proof that in this case,
each upper (lower) semi-continuous function $\eta:X\to \R$ is actually
the pointwise limit of some decreasing (increasing) sequence of
locally Lipschitz functions. \label{page:lipsch-locally-lipsc-2} Based
on this, we have the following special case of Theorem
\ref{theorem-LE-Revisited-v17:1}.

\begin{proposition}
  \label{proposition-LE-Revisited-v17:1}
  Let $(X,d)$ and $(Y,\rho)$ be metric spaces. Then each bounded
  locally Lipschitz map $f:X\to Y$ satisfies
  \eqref{eq:LE-Revisited-v17:1} with respect to some continuous
  function $L:X\times X\to [0,+\infty)$.
\end{proposition}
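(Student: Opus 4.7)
The plan is to combine Proposition \ref{proposition-Loc-Lipschitz-v5:2} with the semi-continuous approximation result recalled just before the proposition. First, I would apply Proposition \ref{proposition-Loc-Lipschitz-v5:2} to the bounded locally Lipschitz map $f$ to obtain an open increasing cover $\{U_n:n\in \N\}$ of $X$ such that each restriction $f\uhr U_n$ is $n$-Lipschitz. The natural candidate for measuring the local Lipschitz constant is the function $\lambda:X\to \N$ defined by
\[
  \lambda(p)=\min\{n\in \N : p\in U_n\},
\]
which is finite-valued because $\{U_n\}$ covers $X$ and satisfies $\lambda\geq 1$.

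Next, I would observe that $\lambda$ is upper semi-continuous since $\{p\in X:\lambda(p)\leq n\}=U_n$ is open for every $n\in \N$. By the approximation result recalled on page \pageref{page:lipsch-locally-lipsc-2}, there exists a decreasing sequence of locally Lipschitz functions $\mu_k:X\to \R$ converging pointwise to $\lambda$; taking $\mu=\mu_1$ then yields a continuous function with $\mu\geq \lambda\geq 1$ on $X$. I would then define $L:X\times X\to [0,+\infty)$ by $L(x,y)=\max\{\mu(x),\mu(y)\}$, which is continuous as the maximum of two continuous functions and is bounded below by $1$.

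To verify \eqref{eq:LE-Revisited-v17:1}, fix $x,y\in X$ and set $n=\max\{\lambda(x),\lambda(y)\}\in \N$. Since $\{U_n\}$ is increasing, both $x$ and $y$ lie in $U_n$, and therefore $\rho(f(x),f(y))\leq n\,d(x,y)\leq L(x,y)\,d(x,y)$, the last inequality holding because $L(x,y)\geq \max\{\lambda(x),\lambda(y)\}=n$. The only delicate point in this plan is that the natural local Lipschitz constant $\lambda$ is merely upper semi-continuous rather than continuous; bridging that gap through the Hausdorff-type approximation theorem is what makes the construction go through, and the boundedness of $f$ is used only in the appeal to Proposition \ref{proposition-Loc-Lipschitz-v5:2}.
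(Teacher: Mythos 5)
Your proposal is correct and follows essentially the same route as the paper: apply Proposition \ref{proposition-Loc-Lipschitz-v5:2}, observe that the minimal index function is upper semi-continuous, dominate it by a continuous function via the Hausdorff approximation theorem, and take the pointwise maximum in the two variables. The paper's proof is identical in substance, merely phrasing the domination step as the existence of a continuous $\ell$ with $\eta\leq\ell$ rather than extracting the first term of the decreasing approximating sequence.
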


\begin{proof}
  Let $f:X\to Y$ be a bounded locally Lipschitz map. Then by
  Proposition~\ref{proposition-Loc-Lipschitz-v5:2}, $X$ has an
  increasing open cover $\{U_n:n\in\N\}$ such that $f\uhr U_n$ is
  $n$-Lipschitz for each $n\in \N$. Next, define a function
  $\eta:X\to \N\subset \R$ by
  \[
    \eta(x)=\min\{n\in\N: x\in U_n\}\quad \text{for every $x\in X$.}
  \]
  Since the cover $U_n$, $n\in \N$, is increasing and open, $\eta$ is
  upper semi-continuous. Indeed, if $x\in X$ and $t\in\R$ with
  $\eta(x)=n<t$, then $x\in U_n$ and ${\eta(y)\leq n<t}$ for every
  $y\in U_n$. Thus, we can now use the aforementioned Hausdorff's
  result in \cite{hausdorff:19}. Hence, in particular, there exists a
  continuous function $\ell:X\to \R$ such that $\eta\leq
  \ell$. Finally, we can define the required function
  $L:X\times X\to [0,+\infty)$ by ${L(x,y)=\max\{\ell(x),\ell(y)\}}$,
  $x,y\in X$. To see that $L(x,y)$ is as in
  \eqref{eq:LE-Revisited-v17:1}, take $x,y\in X$ and for convenience,
  set $n=\max\{\eta(x),\eta(y)\}$. Then $x,y\in U_n$ and by condition,
  $f\uhr U_n$ is $n$-Lipschitz. Accordingly,
  \begin{align*}
    \rho(f(x),f(y))\leq n d(x,y)
    &= \max\{\eta(x),\eta(y)\} d(x,y)\\
    &\leq
      \max\{\ell(x),\ell(y)\} d(x,y)=L(x,y) d(x,y).\qedhere 
  \end{align*}
\end{proof}

\begin{proof}[Proof of Theorem \ref{theorem-LE-Revisited-v17:1}]
  Let $(X,d)$ and $(Y,\rho)$ be metric spaces, and $f:X\to Y$. If
  $L:X\times X\to [0,+\infty)$ is a continuous function as in
  \eqref{eq:LE-Revisited-v17:1}, then it is locally
  bounded. Accordingly, $f$ satisfies the Lipschitz condition locally
  and is, therefore, locally Lipschitz. Conversely, assume that
  $f:X\to Y$ is locally Lipschitz, and consider the equivalent bounded
  metric $\rho_*$ on $Y$ defined by
  $\rho_*(y,z)=\frac{\rho(y,z)}{1+\rho(y,z)}$ for $y,z\in Y$. Then $f$
  remains locally Lipschitz as a map from $(X,d)$ to $(X,\rho_*)$
  because $\rho_*\leq \rho$. Hence, by Proposition
  \ref{proposition-LE-Revisited-v17:1}, there exists a continuous
  function $L_*:X\times X\to [0,+\infty)$ satisfying
  \eqref{eq:LE-Revisited-v17:1} with respect to the metric $\rho_*$,
  i.e.\ for which $\rho_*(f(x),f(y))\leq L_*(x,y) d(x,y)$ for every
  $x,y\in X$. Evidently, the function
  $L(x,y)= \left[1+\rho(f(x),f(y)\right] L_*(x,y)$, for $x,y\in X$, is
  now as in \eqref{eq:LE-Revisited-v17:1} with respect to the metric
  $\rho$ on $Y$.
\end{proof}

\section{Locally Lipschitz Extensions and Selections}
\label{sec:extens-locally-lipsc}

Unlike (pointwise) Lipschitz functions, the extension of locally
Lipschitz functions is based on that of Lipschitz functions and
Lipschitz partitions of unity. Thus, in \cite[Theorem 4.1]{Gutev2020}
(see also \cite[Theorem 5.12]{MR515647}) it was shown that for a
closed subset $A\subset X$ of a metric space $(X,d)$, each locally
Lipschitz function $\varphi:A\to \R$ can be extended to a locally
Lipschitz function $f:X\to \R$. This result was also stated in
\cite[Theorem 4.1.7]{zbMATH07045619} and credited to Czipszer and
Geh\'er \cite{MR71493}, but locally Lipschitz extensions were not
discussed \cite{MR71493} and the proof given in \cite{zbMATH07045619}
is for locally pointwise Lipschitz functions, see Section
\ref{sec:extens-pointw-lipsch}. In fact, the author is not aware of
any explicit formula that will result in locally Lipschitz
extensions.\medskip

Here, we will obtain the following more general result. 

\begin{theorem}
  \label{theorem-Loc-Lipschitz-v5:1}
  Let $(X,d)$ be a metric space, $A\subset X$ be a closed set and
  $\Delta\subset \R$ be an interval. Then each locally Lipschitz
  function $\varphi:A\to \Delta$ can be extended to a locally
  Lipschitz function $f:X\to \Delta$.
\end{theorem}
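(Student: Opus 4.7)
The plan is to reduce to the case of a bounded interval $\Delta$ via the locally Lipschitz homeomorphisms used in the proof of Theorem \ref{theorem-Lipschitz-ext:11} (composition with $\arctan$ when $\Delta=\R$, with $1/t$ when $\Delta$ is a half-line), and then, in the bounded case, to patch together Lipschitz extensions of $\varphi$ on countably many pieces of $A$ using a locally Lipschitz partition of unity of $X$. The explicit sup/inf formula \eqref{eq:Loc-Lipschitz-v4:2} used in Theorem \ref{theorem-Loc-Lipschitz-v17:1} is no longer available here: the local Lipschitz constants of $\varphi$ need not be locally bounded on $A$, so the extension must be produced by gluing rather than by a single supremum or infimum.

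Assume then that $\Delta\subset [a,b]$ with $a<b$, so $\varphi:A\to [a,b]$ is bounded and locally Lipschitz. Proposition \ref{proposition-Loc-Lipschitz-v5:2} yields an open increasing cover $\{A_n:n\in\N\}$ of $A$ (in its subspace topology) with $\varphi\uhr A_n$ being $n$-Lipschitz. Choose open $U_n\subset X$ with $A_n=A\cap U_n$, set $U_0=X\setminus A$, and apply Corollary \ref{corollary-Loc-Lipschitz-Ext:1} to the countable open cover $\{U_n:n\geq 0\}$ of $X$ to obtain a locally finite partition of unity $\{\xi_n:n\geq 0\}$ of locally Lipschitz functions with $\supp(\xi_n)\subset U_n$. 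For each $n\geq 1$, the set $B_n=\supp(\xi_n)\cap A$ is closed in $X$ and contained in $A_n$, so $\varphi\uhr B_n:B_n\to [a,b]$ is $n$-Lipschitz; Proposition \ref{proposition-Lipschitz-ext:11} extends it to a Lipschitz function $f_n:X\to [a,b]$ with $f_n(X\setminus B_n)\subset (a,b)$. Fixing any $c\in (a,b)$ and setting $f_0\equiv c$, the proposed extension is
\[
  f=\sum_{n=0}^\infty \xi_n\cdot f_n:X\to [a,b].
\]

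The verification is then routine. Local finiteness of $\{\xi_n\}$, together with the fact that a product of a bounded locally Lipschitz and a bounded Lipschitz function is locally Lipschitz, makes $f$ locally Lipschitz. For $p\in A$ one has $\xi_0(p)=0$, and every index $n\geq 1$ with $\xi_n(p)>0$ satisfies $p\in B_n$, whence $f_n(p)=\varphi(p)$; the partition-of-unity property then gives $f(p)=\varphi(p)\in \Delta$. For $p\in X\setminus A$, however, $p\notin B_n$ for every $n\geq 1$, so each contributing $f_n(p)$ lies in $(a,b)$, and hence so does the convex combination $f(p)$.

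The main obstacle, and the reason for introducing the closed sets $B_n$ in place of the natural open pieces $A_n$, is twofold. First, $A_n$ is only relatively open in $A$ and need not be closed in $X$, so Proposition \ref{proposition-Lipschitz-ext:11} cannot be applied to $\varphi\uhr A_n$ directly. Second, one must prevent the convex combination defining $f(p)$ for $p\in X\setminus A$ from drifting onto an endpoint of $\Delta$ in the cases where $\Delta$ is open or half-open at $a$ or $b$. Both issues are resolved simultaneously by taking $B_n=\supp(\xi_n)\cap A\subset A_n$, which is why the index-subordinated form of the partition of unity supplied by Corollary \ref{corollary-Loc-Lipschitz-Ext:1} is essential to the argument.
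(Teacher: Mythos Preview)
Your proof is correct and uses the same core mechanism as the paper: cover $A$ by countably many relatively open pieces on which $\varphi$ is Lipschitz, thicken to an open cover of $X$, take an index-subordinated locally Lipschitz partition of unity (Corollary~\ref{corollary-Loc-Lipschitz-Ext:1}), extend $\varphi$ Lipschitz from each closed set $A\cap\supp(\xi_n)$, and form the convex combination $\sum\xi_n f_n$. The difference is in how the target interval is controlled. The paper invokes Theorem~\ref{theorem-Lipschitz-ext:20} directly to obtain Lipschitz extensions $f_n:X\to\Delta$ for \emph{any} interval $\Delta$, so the convex combination lands in $\Delta$ with no further work and no reduction step. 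You instead reduce unbounded $\Delta$ to a bounded one via $\arctan$ or $t\mapsto 1/t$, and then in the bounded case use only Proposition~\ref{proposition-Lipschitz-ext:11} together with the extra term $\xi_0\cdot c$ supported on $X\setminus A$ to force $f(X\setminus A)\subset(a,b)$. Your detour trades the full strength of Theorem~\ref{theorem-Lipschitz-ext:20} for an additional reduction and an auxiliary $U_0$; the paper's route is shorter precisely because that theorem is already available. One small point: you should state explicitly that $a=\inf\Delta$ and $b=\sup\Delta$, since the bare hypothesis ``$\Delta\subset[a,b]$'' does not by itself give $(a,b)\subset\Delta$, which you need for $f(X\setminus A)\subset\Delta$.
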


\begin{proof}
  As in the proof of Theorem \ref{theorem-Loc-Lipschitz-Ext:3}, $A$
  has an open cover $\{U_n:n\in\N\}$ such that each restriction
  $\varphi\uhr U_n$, $n\in\N$, is Lipschitz. Since $A$ is closed, $X$
  has an open cover $\{V_n:n\in\N\}$ with $U_n=V_n\cap A$,
  $n\in\N$. Then by Corollary~\ref{corollary-Loc-Lipschitz-Ext:1},
  $\{V_n:n\in\N\}$ has an index-subordinated locally finite partition
  of unity ${\{\xi_n:n\in\N\}}$ consisting of locally Lipschitz
  functions. Now, for each $n\in \N$, set
  $A_n=A\cap \supp(\xi_n)\subset U_n$ and $\varphi_n=\varphi\uhr
  A_n$. Evidently, $\varphi_n$ is Lipschitz because so is
  $\varphi\uhr U_n$. Hence, according to
  Theorem~\ref{theorem-Lipschitz-ext:20}, each
  $\varphi_n:A_n\to \Delta$ can be extended to a Lipschitz function
  $f_n:X\to \Delta$. Finally, we can define a function $f:X\to \R$ by
  $f=\sum_{n=1}^\infty \xi_n\cdot f_n$. Since the partition of unity
  $\{\xi_n:n\in\N\}$ is locally finite and each product function
  $\xi_n\cdot f_n$, $n\in \N$, is locally Lipschitz, the function $f$
  is also locally Lipschitz. To see that this $f$ is as required, take
  a point $p\in X$, and consider the finite set
  $\sigma_p=\{n\in\N: \xi_n(p)>0\}$. If $p\in A$, then
  $f_n(p)=\varphi_n(p)=\varphi(p)$ for every $n\in\sigma_p$, and
  therefore
  $f(p)=\sum_{n\in\sigma_p} \xi_n(p)\cdot f_n(p) =\varphi(p)\cdot
  \sum_{n\in\sigma_p} \xi_n(p)= \varphi(p)$.  If $p\in X\setminus A$,
  then $f_n(p)\in \Delta$ for every $n\in \N$. Hence, for the same
  reason,
  \[
    f(p)=\sum_{n\in\sigma_p} \xi_n(p)\cdot f_n(p)\in \Delta.\qedhere
  \]
\end{proof}

According to Remark \ref{remark-LE-Revisited-v11:1}, the proof of
Theorem \ref{theorem-Loc-Lipschitz-v5:1} gives an alternative approach
to the last part of the proof of Theorem
\ref{theorem-Lipschitz-ext:11}, i.e.\ the same proof works to show
Theorem \ref{theorem-Lipschitz-ext:11} for the case of an unbounded
interval $\Delta\subset \R$.\medskip

We conclude this paper with a result about locally Lipschitz
selections of set-valued mappings. To this end, let us recall some
terminology. For spaces $X$ and $Y$, we write $\Omega:X\sto Y$ to
designate that $\Omega$ is a map from $X$ to the \emph{nonempty}
subsets of $Y$. In Michael's selection theory, such a map is commonly
called a \emph{set-valued mapping} (also a \emph{multifunction}, or
simply a \emph{carrier} \cite{michael:56a}). A usual map $f:X\to Y$ is
a \emph{selection} for $\Omega:X\sto Y$ if $f(x)\in\Omega(x)$ for
every $x\in X$. Finally, let us recall that a mapping $\Omega:X\sto Y$
is \emph{lower semi-continuous}, or \emph{l.s.c.}, if the preimage
$\Omega^{-1}[U]=\{x\in X: \Omega(x)\cap U\neq \emptyset\}$ is open in
$X$ for every open $U\subset Y$. \medskip

In \cite[Theorem 3.1$'''$]{michael:56a}, Michael showed that for a
perfectly normal space $X$, each convex-valued l.s.c.\ mapping
$\Omega:X\sto \R$ has a continuous selection. Furthermore, it follows
from \cite[Example 1.3$^*$]{michael:56a} that in this case, for each
closed $A\subset X$, each continuous selection $\varphi:A\to \R$ for
$\Omega\uhr A$ can be extended to a continuous selection for
$\Omega$. For an extended discussion on this result and related
selection results, the interested reader is referred to
\cite{Gutev2023}. For instance, it follows from \cite[Corollary
3.5]{Gutev2023} and the proof of \cite[Corollary 4.5]{Gutev2023} that
for an open-convex-valued l.s.c.\ mapping $\Omega:X\sto \R$ on a
countably paracompact normal space $X$ and a closed subset
$A\subset X$, each continuous selection for $\Omega\uhr A$ can be
extended to a continuous selection for $\Omega$. Here, we will refine
this result in the setting of a metric space $(X,d)$. Namely, the
following theorem will be proved.

\begin{theorem}
  \label{theorem-LE-Revisited-v14:1}
  Let $(X,d)$ be a metric space and $\Omega:X\sto \R$ be an
  open-convex-valued l.s.c.\ mapping. Then for each closed subset
  $A\subset X$, each locally Lipschitz selection for $\Omega\uhr A$
  can be extended to a locally Lipschitz selection for $\Omega$.
\end{theorem}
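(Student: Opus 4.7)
The plan is to adapt the proof of Theorem \ref{theorem-Loc-Lipschitz-v5:1}, replacing its single ambient interval $\Delta$ by closed bounded intervals $[a_n,b_n]\subset \R$ that vary with the cover element and are chosen to lie entirely inside $\Omega(x)$ throughout each element. Writing $\Omega(x)=(\alpha(x),\beta(x))$, lower semi-continuity of $\Omega$ amounts to $\alpha$ being upper semi-continuous and $\beta$ lower semi-continuous, so the set
\[
V_{a,b}=\{x\in X:\alpha(x)<a\text{ and }b<\beta(x)\}=\{x\in X:[a,b]\subset \Omega(x)\}
\]
is open for every rational pair $a<b$. The family $\{V_{a,b}\setminus A:a,b\in\Q,\ a<b\}$ is therefore a countable open cover of $X\setminus A$, while the sets $\Omega^A_{a,b}=V_{a,b}\cap \varphi^{-1}((a,b))$ are open in $A$ and $\varphi\uhr \Omega^A_{a,b}$ is bounded and locally Lipschitz. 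Proposition \ref{proposition-Loc-Lipschitz-v5:2} applied to each $\varphi\uhr\Omega^A_{a,b}$ yields a countable open cover $\{U^{a,b}_k:k\in\N\}$ of $\Omega^A_{a,b}$ on whose members $\varphi$ is Lipschitz; lifting each $U^{a,b}_k$ to an open subset $W^{a,b}_k\subset V_{a,b}$ of $X$ with $W^{a,b}_k\cap A=U^{a,b}_k$ and re-indexing produces a countable open cover $\{V_n:n\in\N\}$ of $X$ together with closed bounded intervals $[a_n,b_n]$ such that $[a_n,b_n]\subset \Omega(x)$ for every $x\in V_n$ and, whenever $V_n\cap A\neq\emptyset$, $\varphi(V_n\cap A)\subset [a_n,b_n]$ with $\varphi\uhr(V_n\cap A)$ being Lipschitz.

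Next I would apply Corollary \ref{corollary-Loc-Lipschitz-Ext:1} to obtain an index-subordinated locally finite partition of unity $\{\xi_n:n\in\N\}$ by locally Lipschitz functions. For each $n$, the set $A\cap \supp(\xi_n)$ is closed in $X$ and contained in $V_n\cap A$, so $\varphi\uhr(A\cap \supp(\xi_n))$ is Lipschitz with values in $[a_n,b_n]$; Theorem \ref{theorem-Lipschitz-ext:20} extends it to a Lipschitz function $f_n:X\to [a_n,b_n]$, taking $f_n$ to be any constant from $[a_n,b_n]$ when $V_n\cap A=\emptyset$. Setting $f=\sum_{n=1}^\infty \xi_n\cdot f_n$, local finiteness of $\{\xi_n\}$ together with each $f_n$ being bounded and Lipschitz makes $f$ locally Lipschitz exactly as in the proof of Theorem \ref{theorem-Loc-Lipschitz-v5:1}. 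For $p\in A$, every $n$ with $\xi_n(p)>0$ satisfies $p\in A\cap\supp(\xi_n)$, so $f_n(p)=\varphi(p)$ and consequently $f(p)=\varphi(p)$. For an arbitrary $p\in X$, the active values $f_n(p)$ all lie in $[a_n,b_n]\subset \Omega(p)$, and convexity of $\Omega(p)$ forces the convex combination $f(p)$ to belong to $\Omega(p)$, as required.

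I expect the main obstacle to be the first step: neither lower semi-continuity of $\Omega$ nor the local Lipschitz property of $\varphi$ by itself supplies a countable open cover of $X$ with the needed $\Omega$-admissible intervals, so the two must be coupled through the rational skeleton $\{V_{a,b}\}$ before Proposition \ref{proposition-Loc-Lipschitz-v5:2} can deliver countability. Once this cover and its intervals are in hand, the rest is a routine variant of Theorem \ref{theorem-Loc-Lipschitz-v5:1}, with the crucial additional use of convexity of $\Omega(p)$ to convert the partition-of-unity combination into a selection rather than merely a continuous extension.
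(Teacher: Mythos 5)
Your proposal is correct, but it takes a genuinely different route from the paper. The paper's proof is modular: it first extends $\varphi$ to an arbitrary locally Lipschitz function $g:X\to\R$ via Theorem \ref{theorem-Loc-Lipschitz-v5:1} (ignoring $\Omega$ entirely), then uses the open-graph property of Proposition \ref{proposition-LE-Revisited-v14:1} to see that $U=\{x\in X:g(x)\in\Omega(x)\}$ is an open neighbourhood of $A$, and finally blends $g$ with a global locally Lipschitz selection $h$ for $\Omega$ (Theorem \ref{theorem-Lipschitz-vgg-r1:2}) by setting $f=(1-\xi)g+\xi h$, where $\xi=\frac{d(\cdot,A)}{d(\cdot,A)+d(\cdot,X\setminus U)}$ is locally Lipschitz, vanishes on $A$ and equals $1$ off $U$; convexity of the values makes this blend a selection. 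You instead fold everything into a single partition-of-unity gluing over a countable cover adapted simultaneously to $\Omega$ (via the rational constraint intervals $[a_n,b_n]\subset\Omega(x)$ on each $V_n$, whose openness is exactly the open-graph observation) and to $\varphi$ (Lipschitz on each trace $V_n\cap A$, via Proposition \ref{proposition-Loc-Lipschitz-v5:2}). Your construction is sound --- in effect it re-proves Theorems \ref{theorem-Loc-Lipschitz-v5:1} and \ref{theorem-Lipschitz-vgg-r1:2} at once in this special setting, at the cost of the heavier double-indexed cover; the paper's version is shorter because it reuses those two results as black boxes. One small loose end: you should also prescribe $f_n$ as a constant in $[a_n,b_n]$ when $A\cap\supp(\xi_n)=\emptyset$ even though $V_n\cap A\neq\emptyset$, since Theorem \ref{theorem-Lipschitz-ext:20} presupposes a nonempty domain; this does not affect the argument.
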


The proof of Theorem \ref{theorem-LE-Revisited-v14:1} is based on two
observations, the first of which is essentially the proof of
\cite[Corollary 4.5]{Gutev2023} and is valid for any space $X$.

\begin{proposition}
  \label{proposition-LE-Revisited-v14:1}
  Each open-convex-valued l.s.c.\ mapping $\Omega:X\sto \R$ has an
  open graph $\Gamma(\Omega)=\{(x,t)\in X\times \R: t\in \Omega(x)\}$.
\end{proposition}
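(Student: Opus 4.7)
The plan is to show directly that every point of the graph has a product neighborhood inside the graph, exploiting l.s.c.\ in both directions around the chosen ordinate and then closing up the gap by convexity.

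So, fix $(x_0,t_0)\in \Gamma(\Omega)$. Since $\Omega(x_0)\subset \R$ is open and convex, it is an (possibly unbounded) open interval containing $t_0$, so I can pick $\varepsilon>0$ with $(t_0-\varepsilon,t_0+\varepsilon)\subset \Omega(x_0)$. Next, I would choose the two one-sided witness intervals
\[
U_-=\left(t_0-\varepsilon,\;t_0-\tfrac{\varepsilon}{2}\right) \qquad\text{and}\qquad
U_+=\left(t_0+\tfrac{\varepsilon}{2},\;t_0+\varepsilon\right),
\]
both of which are open in $\R$ and meet $\Omega(x_0)$. By the lower semi-continuity of $\Omega$, the sets $V_-=\Omega^{-1}[U_-]$ and $V_+=\Omega^{-1}[U_+]$ are open in $X$ and both contain $x_0$, hence so does $V=V_-\cap V_+$.

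The key step is then to check that $V\times \bigl(t_0-\tfrac{\varepsilon}{2},t_0+\tfrac{\varepsilon}{2}\bigr)\subset \Gamma(\Omega)$. Given $x\in V$, by construction there exist $s_-\in \Omega(x)\cap U_-$ and $s_+\in \Omega(x)\cap U_+$, so $s_-<t_0-\tfrac{\varepsilon}{2}<t_0+\tfrac{\varepsilon}{2}<s_+$. Since $\Omega(x)$ is convex, it contains the whole interval $[s_-,s_+]$, hence contains every $t\in (t_0-\tfrac{\varepsilon}{2},t_0+\tfrac{\varepsilon}{2})$. This produces an open neighborhood of $(x_0,t_0)$ inside $\Gamma(\Omega)$.

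There is no real obstacle here: the proof is essentially a routine unpacking of the definitions of lower semi-continuity and convexity. The only subtle point is to remember to use \emph{two} open witness sets on opposite sides of $t_0$ rather than a single neighborhood of $t_0$, because the l.s.c.\ condition only guarantees non-empty intersection, not containment; convexity of the values is what then transfers that bilateral non-empty-intersection information into the required containment of a whole neighborhood of $t_0$.
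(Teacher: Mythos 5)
Your proof is correct and takes essentially the same approach as the paper: pick two open witness sets on opposite sides of $t_0$, intersect their l.s.c.\ preimages to get an open neighborhood $V$ of $x_0$, and use convexity of the values to fill in the whole interval. The paper merely uses the half-lines $(-\infty,s)$ and $(t,+\infty)$ for two points $s<t$ of $\Omega(x_0)$ in place of your bounded intervals $U_-$ and $U_+$; the argument is otherwise identical.
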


\begin{proof}
  As in the proof of \cite[Corollary 4.5]{Gutev2023}, take $p\in X$
  and $s,t\in \Omega(p)$ with $s<t$. Then
  $(-\infty,s)\cap \Omega(p)\neq \emptyset$ because $\Omega(p)$ is
  open. Similarly, $(t,+\infty)\cap \Omega(p)\neq \emptyset$. Hence,
  $V=\Omega^{-1}\left[(-\infty,s)\right]\cap
  \Omega^{-1}\left[(t,+\infty)\right]$ is an open set with $p\in
  V$. Moreover, $V\times[s,t]\subset \Gamma(\Omega)$ because $\Omega$
  is convex-valued.
\end{proof}

The second observation is the special case of Theorem
\ref{theorem-LE-Revisited-v14:1} when $A=\emptyset$.

\begin{theorem}
  \label{theorem-Lipschitz-vgg-r1:2}
  If $(X,d)$ is a metric space, then each open-convex-valued l.s.c.\
  mapping $\Omega:X\sto \R$ has a locally Lipschitz selection.
\end{theorem}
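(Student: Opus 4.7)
The plan is to reduce the existence of a locally Lipschitz selection to the countable Lipschitz partition-of-unity machinery of Section \ref{sec:lipsch-locally-lipsc} by decomposing $X$ according to which rational numbers lie in $\Omega(x)$. First, I would apply Proposition \ref{proposition-LE-Revisited-v14:1}: the graph $\Gamma(\Omega)$ is open in $X\times \R$, so for each $t\in \R$ the horizontal section $G_t=\{x\in X:t\in \Omega(x)\}$ is open in $X$. Fix an enumeration $\Q=\{q_n:n\in \N\}$. Every value $\Omega(x)$ is a nonempty open subset of $\R$ and hence meets $\Q$, so $\{G_{q_n}:n\in \N\}$ is a countable open cover of $X$.

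Next, I would invoke Corollary \ref{corollary-Loc-Lipschitz-Ext:1} applied to this countable cover to obtain a locally finite index-subordinated partition of unity $\{\xi_n:n\in \N\}$ of locally Lipschitz functions with $\supp(\xi_n)\subset G_{q_n}$ for every $n\in \N$. The candidate selection would then be
\[
f(x)=\sum_{n=1}^\infty q_n\,\xi_n(x),\qquad x\in X,
\]
which is well defined because at every $x$ the sum has only finitely many nonzero terms by local finiteness of $\{\xi_n\}$.

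The remaining step is to verify that $f$ is locally Lipschitz and is a selection for $\Omega$. Local Lipschitz-ness is immediate from local finiteness: about any point $p\in X$ there is an open neighborhood $W$ on which only finitely many $\xi_n$ do not vanish, and on a possibly smaller open neighborhood each of these finitely many $\xi_n$ is Lipschitz; there $f$ coincides with a finite linear combination of Lipschitz functions and is thus Lipschitz. The selection property uses convexity of $\Omega(x)$: for $x\in X$ and $n\in \sigma_x=\{k\in\N:\xi_k(x)>0\}$, one has $x\in \supp(\xi_n)\subset G_{q_n}$, so $q_n\in \Omega(x)$; since $\sigma_x$ is finite and $\sum_{n\in\sigma_x}\xi_n(x)=1$, the value $f(x)=\sum_{n\in\sigma_x}\xi_n(x)\,q_n$ is a convex combination of elements of $\Omega(x)$ and therefore lies in $\Omega(x)$.

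I do not anticipate a serious obstacle. The only point worth emphasizing is that the cover $\{G_{q_n}\}$ is \emph{countable}, so Corollary \ref{corollary-Loc-Lipschitz-Ext:1} applies directly and the argument stays within the elementary framework of Section \ref{sec:lipsch-locally-lipsc}, without any appeal to Stone's theorem or to the full Frol\'{\i}k partition-of-unity result.
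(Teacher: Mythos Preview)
Your proposal is correct and is essentially the paper's own proof: decompose $X$ by the open sets $G_{q_n}=\Omega^{-1}[\{q_n\}]$ indexed by the rationals (using Proposition~\ref{proposition-LE-Revisited-v14:1}), apply Corollary~\ref{corollary-Loc-Lipschitz-Ext:1} to this countable cover, and form the convex combination $f=\sum_n q_n\xi_n$. Your write-up simply spells out the verification of local Lipschitz-ness and of the selection property in more detail than the paper does.
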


\begin{proof}
  We follow the construction in \cite[Remark 4.7]{Gutev2020}, see also
  the proof of \cite[Theorem 1.1]{Gutev2023}. Namely, let
  $\Omega:X\sto \R$ be an open-convex-valued l.s.c.\ mapping.  Then by
  Proposition~\ref{proposition-LE-Revisited-v14:1}, each set
  $U_r=\Omega^{-1}[\{r\}]$, $r\in\Q$, is open in $X$. Moreover,
  $\{U_r: r\in\Q\}$ is a cover of $X$ because $\Omega$ is
  open-valued. Since the rational numbers $\Q$ are countable, by
  Corollary~\ref{corollary-Loc-Lipschitz-Ext:1}, $\{U_r: r\in\Q\}$ has
  an index-subordinated locally finite partition of unity
  $\{\xi_r:r\in\Q\}$ consisting of locally Lipschitz
  functions. Finally, using that $\Omega$ is also convex-valued, the
  required locally Lipschitz selection $f:X\to \R$ is defined by
  $f(x)=\sum_{r\in\Q}\xi_r(x)\cdot r$, $x\in X$.
\end{proof}

\begin{proof}[Proof of Theorem \ref{theorem-LE-Revisited-v14:1}]
  Let $A\subset X$ be a nonempty closed subset and ${\varphi:A\to \R}$
  be a locally Lipschitz selection for $\Omega\uhr A$. We can follow
  the proof of \cite[Corollary 3.5]{Gutev2023}. Briefly, by Theorem
  \ref{theorem-Loc-Lipschitz-v5:1}, $\varphi$ can be extended to a
  locally Lipschitz function ${g:X\to \R}$. Then by Proposition
  \ref{proposition-LE-Revisited-v14:1},
  $U=\{x\in X: g(x)\in \Omega(x)\}$ is an open set with $A\subset
  U$. The essential case in this proof is when
  ${B=X\setminus U\neq \emptyset}$. In this case, take a locally
  Lipschitz function $\xi:X\to [0,1]$ such that $A=\xi^{-1}(0)$ and
  ${B= \xi^{-1}(1)}$, for instance
  $\xi(x)=\frac{d(x,A)}{d(x,A)+d(x,B)}$ for $x\in X$. Next, using
  Theorem \ref{theorem-Lipschitz-vgg-r1:2}, take a locally
  Lipschitz selection $h:X\to \R$ for $\Omega$. Finally, define
  another locally Lipschitz function $f:X\to \R$ by
  \[
    f(x)=(1-\xi(x))\cdot g(x)+\xi(x)\cdot h(x)\quad \text{for every
      $x\in X$.}
  \]
  Evidently, $f$ is a locally Lipschitz selection for $\Omega$ with
  $f\uhr A=\varphi$.
\end{proof}

Special cases of Theorem \ref{theorem-LE-Revisited-v14:1} are known in
different terms. Namely, Dowker showed in \cite[Theorem 4]{dowker:51}
that a space $X$ is countably paracompact and normal if and only if
for each pair of functions $g,h:X\to \R$ such that $g$ is upper
semi-continuous, $h$ is lower semi-continuous and $g<h$, there exists
a continuous function $f:X\to \R$ with $g <f<h$. In this case, as
shown in the proof of \cite[Proposition 4.3]{Gutev2023},
$\Omega(x)=(g(x), h(x))$, $x\in X$, defines an open-convex-valued
l.s.c.\ mapping $\Omega:X\sto \R$. Accordingly, for a metric space
$(X,d)$, Theorem \ref{theorem-LE-Revisited-v14:1} contains the
following refinement of the aforementioned Dowker's result.

\begin{corollary}
  \label{corollary-LE-Revisited-v18:1}
  Let $(X,d)$ be a metric space and $g,h:X\to \R$ be functions such
  that $g$ is upper semi-continuous, $h$ is lower semi-continuous and
  $g<h$. Also, let $A\subset X$ be a closed set and $\varphi:A\to \R$
  be a locally Lipschitz function with $g\uhr A< \varphi<h\uhr
  A$. Then $\varphi$ can be extended to a locally Lipschitz function
  $f:X\to \R$ such that $g< f< h$.
\end{corollary}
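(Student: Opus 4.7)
The plan is to recast the statement in the set-valued selection language of Theorem~\ref{theorem-LE-Revisited-v14:1}. I define a mapping $\Omega:X\sto \R$ by
\[
  \Omega(x)=(g(x),h(x)),\qquad x\in X.
\]
Since $g<h$, each value $\Omega(x)$ is a nonempty open interval, so $\Omega$ is automatically open-valued and convex-valued.

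The first step is to verify that $\Omega$ is lower semi-continuous. As remarked in the paragraph immediately preceding the statement, this is exactly what is shown in the proof of \cite[Proposition 4.3]{Gutev2023}; the direct check is that for every open interval $(a,b)\subset \R$, the preimage
\[
  \Omega^{-1}[(a,b)]=\{x\in X:g(x)<b\ \text{and}\ h(x)>a\}
\]
is open because $g$ is upper semi-continuous and $h$ is lower semi-continuous, and a standard reduction to open intervals handles arbitrary open subsets of $\R$.

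The second step is to observe that the hypothesis $g\uhr A<\varphi<h\uhr A$ is precisely the selection condition $\varphi(x)\in \Omega(x)$ for every $x\in A$. Hence $\varphi:A\to \R$ is a locally Lipschitz selection for $\Omega\uhr A$. Applying Theorem~\ref{theorem-LE-Revisited-v14:1} then yields a locally Lipschitz extension $f:X\to \R$ that remains a selection for $\Omega$, and the selection property $f(x)\in(g(x),h(x))$ is exactly $g<f<h$.

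No substantial obstacle is expected: the whole content of the corollary is the recognition that the classical Dowker insertion problem between an upper and a lower semi-continuous function is a particular instance of the selection extension theorem that has already been established, so once the l.s.c.\ open-convex-valued nature of $\Omega$ is in hand the proof reduces to citing Theorem~\ref{theorem-LE-Revisited-v14:1}.
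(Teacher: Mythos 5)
Your proposal is correct and follows exactly the route the paper takes: defining $\Omega(x)=(g(x),h(x))$, noting it is open-convex-valued and l.s.c.\ (the paper cites the proof of \cite[Proposition 4.3]{Gutev2023} for this, while you also supply the direct verification), and then invoking Theorem~\ref{theorem-LE-Revisited-v14:1}. Nothing further is needed.
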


The special case of $A=\emptyset$ in Corollary
\ref{corollary-LE-Revisited-v18:1} was obtained in \cite[Theorem
5.4]{MR515647}, see also \cite[Remark 4.7]{Gutev2020}. The argument
suggested in \cite{MR515647} is to use paracompactness of metrizable
spaces and follow an alternative proof of Dowker's insertion theorem
given in \cite[4.3, p.\ 171]{dugundji:66}, but now to take a locally
Lipschitz partition of unity. Here, we used the same approach but
based entirely on Corollary \ref{corollary-Loc-Lipschitz-Ext:1}. Let
us remark that Theorem \ref{theorem-LE-Revisited-v14:1} also contains
the case when the functions $g$ and $h$ in Corollary
\ref{corollary-LE-Revisited-v18:1} may take infinite values. In this
case, just like before, the mapping
$X\ni x\sto \Omega(x)=(g(x),h(x))\subset \R$ is l.s.c.\ and
open-convex-valued.\medskip

As remarked before (see page \pageref{page:lipsch-locally-lipsc-2}),
it was shown in \cite[Theorem 5.1]{Gutev2020} that for a metric space
$(X,d)$, each upper (lower) semi-continuous function $\eta:X\to \R$ is
the pointwise limit of some decreasing (increasing) sequence of
locally Lipschitz functions. Theorem \ref{theorem-Lipschitz-vgg-r1:2}
implies the following complementary result.

\begin{corollary}
  \label{corollary-Lipschitz-vgg-r1:1}
  For a metric space $(X,d)$, each continuous function $\varphi:X\to
  \R$ is the uniform limit of some strictly decreasing sequence of
  locally Lipschitz functions. 
\end{corollary}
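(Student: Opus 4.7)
The plan is to reduce the statement to Theorem~\ref{theorem-Lipschitz-vgg-r1:2} by realising the desired locally Lipschitz approximants as selections of a sequence of narrow open ``tubes'' lying strictly above the graph of $\varphi$ and shrinking onto it. Given a continuous function $\varphi:X\to \R$, for each $n\in\N$ define $\Omega_n:X\sto \R$ by
\[
\Omega_n(x)=\left(\varphi(x)+\tfrac{1}{2^{n+1}},\ \varphi(x)+\tfrac{1}{2^n}\right),\qquad x\in X.
\]
Each $\Omega_n$ is visibly open-convex-valued.

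Next, I would verify that each $\Omega_n$ is l.s.c. For any bounded open interval $(a,b)\subset \R$, one has $\Omega_n(x)\cap (a,b)\neq \emptyset$ if and only if the two inequalities $\varphi(x)+2^{-(n+1)}<b$ and $\varphi(x)+2^{-n}>a$ hold, i.e.\ if and only if $a-2^{-n}<\varphi(x)<b-2^{-(n+1)}$. This defines an open subset of $X$ thanks to the continuity of $\varphi$, and since every open subset of $\R$ is a countable union of such intervals, $\Omega_n^{-1}[U]$ is open for every open $U\subset \R$.

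Applying Theorem~\ref{theorem-Lipschitz-vgg-r1:2} to each $\Omega_n$ then produces a locally Lipschitz selection $f_n:X\to \R$, and the dyadic placement of the tubes delivers both required properties at once. For every $x\in X$ the bound $\varphi(x)+2^{-(n+1)}<f_n(x)<\varphi(x)+2^{-n}$ gives $|f_n(x)-\varphi(x)|<2^{-n}$, so $f_n\to \varphi$ uniformly. Moreover, $f_{n+1}(x)<\varphi(x)+2^{-(n+1)}<f_n(x)$ for every $x\in X$, so the sequence $(f_n)$ is strictly decreasing pointwise.

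The only step that requires a small amount of design is the choice of the tubes $\Omega_n$: they must be open intervals (so that the multifunction is open-valued), their widths must tend to $0$ (to force uniform convergence), and consecutive tubes must be separated by a positive gap (so that any selections are automatically strictly decreasing, regardless of how they are obtained). The geometric prescription above meets all three requirements simultaneously, and once it is in place the invocation of Theorem~\ref{theorem-Lipschitz-vgg-r1:2} is entirely routine; there is no real obstacle beyond setting up the right selection problem.
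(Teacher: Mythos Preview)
Your proof is correct and follows essentially the same approach as the paper: both arguments apply Theorem~\ref{theorem-Lipschitz-vgg-r1:2} to a sequence of open tubes lying strictly above the graph of $\varphi$ whose widths shrink to zero. The only difference is that the paper builds the tubes recursively, choosing $f_{n+1}$ with $\varphi<f_{n+1}<\tfrac{\varphi+f_n}{2}$, whereas you fix the dyadic tubes $\bigl(\varphi+2^{-(n+1)},\varphi+2^{-n}\bigr)$ in advance; your non-recursive variant is slightly cleaner and avoids having to note that the upper endpoint $\tfrac{\varphi+f_n}{2}$ remains continuous at each stage.
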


\begin{proof}
  It suffices to construct a sequence $f_n:X\to \R$, $n\in \N$, of
  locally Lipschitz functions such that $\varphi<f_1<\varphi+1$ and
  $\varphi< f_{n+1}<\frac{\varphi+f_n}2$ for every $n\in \N$. To this
  end, according to Theorem \ref{theorem-Lipschitz-vgg-r1:2}, there
  exists a locally Lipschitz function $f_1:X\to \R$ with
  $\varphi<f_1<\varphi+1$, because mapping
  $X\ni x\sto (\varphi(x), \varphi(x)+1)\subset \R$ is l.s.c.\ and
  open-convex-valued. Similarly, $\varphi<f_2<\frac{\varphi+ f_1}2$
  for some locally Lipschitz function $f_2:X\to \R$. The construction
  can be carried on by induction.
\end{proof}

It is well known that each continuous function on a metric space is a
uniform limit of locally Lipschitz functions, see for instance
Miculescu \cite[Theorem 2]{MR1825525} and Garrido and Jaramillo
\cite[Corollary 2.8]{MR2037989}. A simple and self-contained proof of
this fact was also given in \cite[Proposition 5.3]{Gutev2020}.\medskip

Regarding uniform approximations by locally Lipschitz functions, let
us recall that for metric spaces $(X,d)$ and $(Y,\rho)$, a map
$f:X\to Y$ is called \emph{Lipschitz in the small} \cite{MR538094} if
there exists $\delta>0$ and $K>0$ such that each restriction
$f\uhr \mathbf{O}(x,\delta)$ is $K$-Lipschitz, for every $x\in
X$. Evidently, each Lipschitz in the small map is locally Lipschitz,
but the converse is not true. For instance, the locally Lipschitz
function $t^2:\R\to \R$ is not Lipschitz in the small. Also, there are
simple examples of functions that are Lipschitz in the small, but are
not Lipschitz. For a metric space $(X,d)$, each uniformly continuous
function $\varphi:X\to \R$ is a uniform limit of functions which are
Lipschitz in the small. For a bounded uniformly continuous function
$\varphi:X\to \R$, this result can be found in \cite[Theorem
6.8]{MR1800917}, see also \cite[Theorem 1]{MR1973966}. For an
arbitrary uniformly continuous function, the result was obtained by
Garrido and Jaramillo \cite[Theorem 1]{MR2376153}. Subsequently, a
simple proof of this result was given by Beer and Garrido in
\cite[Theorem 6.1]{MR3334948}, and the proof was further simplified in
\cite[Theorem 5.6]{Gutev2020}.

\subsection*{Acknowledgement.} In conclusion, the author would like to 
express his sincere gratitude to the referee for several valuable
remarks and suggestions.


\begin{thebibliography}{10}

\bibitem{Aggarwal2024}
M.~Aggarwal, \emph{Split {Lipschitz}-type functions}, Topology Appl.
  \textbf{341} (2024), Article ID 108730, 13\,p.

\bibitem{Aggarwal2023} M.~Aggarwal and \c{S}t.~Cobzaş, \emph{On some
    {L}ipschitz-type functions}, J.  Math. Anal. Appl. \textbf{517}
  (2023), no.~2, Article ID 126631, 15\,p.
  
\bibitem{Aronsson1967}
G.~Aronsson, \emph{Extension of functions satisfying {Lipschitz} conditions},
  Ark. Mat. \textbf{6} (1967), 551--561.

\bibitem{MR1504475}
R.~Baire, Sur les s{\'e}ries {\`a} termes continus et tous de m\^eme signe.
Bull. Soc. Math. Fr. \textbf{32}  (1904), 125--128 (French).

\bibitem{Balogh2004}
Z.~M. Balogh, K.~Rogovin, and T.~Z{\"u}rcher, \emph{The {Stepanov}
  differentiability theorem in metric measure spaces}, J. Geom. Anal.
  \textbf{14} (2004), no.~3, 405--422.
 
\bibitem{Beer2023}
G.~Beer, \emph{Bornologies and {Lipschitz} analysis}, Boca Raton, FL: CRC
  Press/Science Publishers, 2023.

\bibitem{MR3334948} G.~Beer and M.~I. Garrido, \emph{Locally
    {L}ipschitz functions, cofinal completeness, and {UC} spaces},
  J. Math. Anal. Appl. \textbf{428} (2015), no.~2, 804--816.
  
\bibitem{Cheeger1999}
J.~Cheeger, \emph{Differentiability of {Lipschitz} functions on metric measure
  spaces}, Geom. Funct. Anal. \textbf{9} (1999), no.~3, 428--517.

\bibitem{zbMATH07045619}
\c{S}t.~{Cobza\c{s}}, R.~{Miculescu}, and A.~{Nicolae}, \emph{{Lipschitz
  functions}}, {Lect. Notes Math.}, vol. 2241, Cham: Springer, 2019.

\bibitem{MR71493} J.~Czipszer and L.~Geh\'{e}r, \emph{Extension of
    functions satisfying a {L}ipschitz condition}, Acta
  Math. Acad. Sci. Hung.  \textbf{6} (1955), 213--220.

\bibitem{MR0281155}
G.~De~Marco and R.~G. Wilson, \emph{Realcompactness and partitions of unity},
  Proc. Amer. Math. Soc. \textbf{30} (1971), 189--194.

\bibitem{dowker:51}
C.~H. Dowker, \emph{On countably paracompact spaces}, Canad. J. of Math.
  \textbf{3} (1951), 291--224.

\bibitem{dugundji:51}
J.~Dugundji, \emph{An extension of {T}ietze's theorem}, Pacific J. Math.
  \textbf{1} (1951), 353--367.

\bibitem{dugundji:66}
\bysame, \emph{Topology}, Boston, 1966.

\bibitem{MR2564873}
E.~Durand-Cartagena and J.~A. Jaramillo, \emph{Pointwise {L}ipschitz functions
  on metric spaces}, J. Math. Anal. Appl. \textbf{363} (2010), no.~2, 525--548.

\bibitem{engelking:89}
R.~Engelking, \emph{General topology, revised and completed edition},
  Heldermann Verlag, Berlin, 1989.

\bibitem{MR1476756}
A.~Fathi, \emph{Partitions of unity for countable covers}, Amer. Math. Mon.
  \textbf{104} (1997), no.~8, 720--723.

\bibitem{MR814046} Z.~Frol\'{\i}k, \emph{Existence of
    {$\ell_\infty$}-partitions of unity},
  Rend. Sem. Mat. Univ. Politec. Torino \textbf{42} (1984), no.~1,
  9--14.

\bibitem{MR2037989}
M.~I. Garrido and J.~A. Jaramillo, \emph{Homomorphisms on function lattices},
  Monatsh. Math.  \textbf{141} (2004), no.~2, 127--146.

\bibitem{MR2376153} \bysame, \emph{Lipschitz-type functions on metric
    spaces}, J. Math. Anal. Appl.  \textbf{340} (2008), no.~1,
  282--290.
  
\bibitem{Gutev2020}
V.~Gutev, \emph{Lipschitz extensions and approximations}, J. Math. Anal. Appl.
  \textbf{491} (2020), no.~1, Article ID 124242, 13\,p.

\bibitem{Gutev2023}
\bysame, \emph{Paracompactness and open relations}, Math. Slovaca \textbf{73}
  (2023), no.~6, 1577--1586.

\bibitem{zbMATH02610879}
H.~{Hahn}, \emph{{\"Uber halbstetige und unstetige Funktionen}}, {Wien. Ber.}
  \textbf{126} (1917), 91--110 (German).

\bibitem{Hahn1921} \bysame, \emph{Theorie der reellen Funktionen,}
  Erster Band, Berlin: J.  Springer, 1921 (German).

\bibitem{hausdorff:19}
F.~Hausdorff, \emph{{\"U}ber halbstetige {F}unktionen und deren
  {V}erallgemeinerung}, Math. Z. \textbf{5} (1919), 292--309 (German).

\bibitem{MR1800917}
J.~Heinonen, \emph{Lectures on analysis on metric spaces}, Universitext,
  Springer-Verlag, New York, 2001.

\bibitem{Heinonen2007}
\bysame, \emph{Nonsmooth calculus}, B. Am. Math. Soc. \textbf{44} (2007),
  no.~2, 163--232.

\bibitem{Keith2004a}
S.~Keith, \emph{A differentiable structure for metric measure spaces}, Adv.
  Math. \textbf{183} (2004), no.~2, 271--315.

\bibitem{Keith2004} \bysame, \emph{Measurable differentiable
    structures and the {Poincar{\'e}} inequality}, Indiana
  Univ. Math. J. \textbf{53} (2004), no.~4, 1127--1150.

\bibitem{MR538094}
J.~Luukkainen, \emph{Rings of functions in {L}ipschitz topology},
Ann. Acad. Sci. Fenn. Ser. A I Math. \textbf{4} (1979), no.~1, 119--135.
  
\bibitem{MR515647}
J.~Luukkainen and J.~V\"{a}is\"{a}l\"{a}, \emph{Elements of {L}ipschitz
  topology}, Ann. Acad. Sci. Fenn., Ser. A I Math. \textbf{3} (1977), no.~1,
  85--122.

\bibitem{MR1687460}
J.~Mal\'y, \emph{A simple proof of the {S}tepanov theorem on differentiability
  almost everywhere}, Expo. Math. \textbf{17} (1999), no.~1, 59--61.

\bibitem{MR1562984}
E.~J. McShane, \emph{Extension of range of functions}, Bull. Amer. Math. Soc.
  \textbf{40} (1934), no.~12, 837--842.

\bibitem{michael:56a}
E.~Michael, \emph{Continuous selections {I}}, Ann. of Math. \textbf{63} (1956),
  361--382.

\bibitem{MR1825525}
R.~Miculescu, \emph{Approximation of continuous functions by {L}ipschitz
  functions}, Real Anal. Exchange \textbf{26} (2000/01), no.~1, 449--452.

\bibitem{MR1973966}
\bysame, \emph{Approximations by {L}ipschitz functions generated by
  extensions}, Real Anal. Exchange \textbf{28} (2002/03), no.~1, 33--40.

\bibitem{MR1511935}
H.~Rademacher, \emph{\"{U}ber partielle und totale differenzierbarkeit von
  {F}unktionen mehrerer {V}ariabeln und \"{u}ber die {T}ransformation der
  {D}oppelintegrale}, Math. Ann. \textbf{79} (1919), no.~4, 340--359 (German).

\bibitem{MR3099433} K.~Sakai, \emph{Geometric aspects of general
    topology}, Tokyo: Springer, 2013.

\bibitem{MR253053}
C.~H. Scanlon, \emph{Rings of functions with certain {L}ipschitz properties},
  Pacific J. Math. \textbf{32} (1970), 197--201.

\bibitem{MR1512177}
W.~Stepanoff, \emph{\"{U}ber totale {D}ifferenzierbarkeit}, Math. Ann.
  \textbf{90} (1923), no.~3-4, 318--320 (German).

\bibitem{zbMATH02591494}
V.~V. {Stepanov}, \emph{{Sur les conditions de l'existence de la
  diff\'erentielle totale}}, {Rec. Math. Moscou} \textbf{32} (1925), 511--527
  (French).

\bibitem{stone:48}
A.~H. Stone, \emph{Paracompactness and product spaces}, Bull. Amer. Math. Soc.
  \textbf{54} (1948), 977--982.

\bibitem{MR1501735}
H.~Whitney, \emph{Analytic extensions of differentiable functions defined in
  closed sets}, Trans. Amer. Math. Soc. \textbf{36} (1934), no.~1, 63--89.

\end{thebibliography}
\end{document}